\newtheorem{prop}{Proposition}[section]
\newtheorem{theo}{Theorem}
\newtheorem{lemm}{Lemma}[section]
\newtheorem{rema}{Remark}
\newtheorem{clai}{Claim}
\newtheorem{claim}{Claim}[section]
\newtheorem{propri}{Properties}[section]
\newtheorem{proper}{Property}
 \newtheoremstyle{defi}
 {3 pt}{3 pt}{\rm}{3 pt}{\bf}{\hskip -0.5 pt . }{0 pt}
 {\thmname{#1}\thmnumber{#2}\thmnote{\textnormal(#3)}}
 \theoremstyle{defi}
 \newtheorem{defi}{Definition }[section]
\def\inf{\mathop{\rm Inf}} \def\infi{\mathop{\rm inf}}
\def\sup{\mathop{\rm Sup}} \def\supr{\mathop{\rm sup}}  \def\du{\mathop{\sqcup}}
\begin{document}

\title[.]{\bf Distortion in groups of Affine Interval Exchange transformations.}
\author{Nancy Guelman, Isabelle Liousse}

\address{ {\bf    Nancy  GUELMAN}, IMERL, Facultad de Ingenier\'{\i}a,  {Universidad de la Rep\'ublica,}{ C.C. 30, Montevideo, Uruguay.} \emph {nguelman@fing.edu.uy }. }

\address{{\bf    Isabelle LIOUSSE}, UMR CNRS 8524, Universit\'{e} de Lille1 ,59655 Villeneuve d'Ascq C\'{e}dex,   France.  \emph {liousse@math.univ-lille1.fr}. }

\begin{abstract}
\noindent  In this paper, we study distortion in the  group $\mathcal A$  of Affine Interval Exchange Transformations (AIET). We prove that any distorted element $f$ of $\mathcal A$, has an iterate $f^ k$ that is conjugate by an element of  $\mathcal A$ to a product of infinite order restricted rotations, with pairwise disjoint supports.
As consequences we prove that no Baumslag-Solitar group, $ BS(m,n)$ with $\vert m \vert \neq \vert n \vert $, acts faithfully by elements of $\mathcal A$; every finitely generated nilpotent group of $\mathcal A$  is virtually abelian  and there is no distortion element in $\mathcal A_{\mathbb Q}$, the subgroup of $\mathcal A$ consisting of  rational AIETs.
\end{abstract}

\maketitle

\section{Introduction.}

 In the recent years, notions of distortion have attracted the interest of many people working on geometric group theory as well as rigidity theory (see \cite{Fra} for a survey).

On one hand,  some results established the existence of distorted elements in transformations groups. For instance, D. Calegari and M. Freedman, in \cite{CF}, showed that all homeomorphisms of spheres are distorted.
 Moreover, in the case of the unit circle, they proved that
 every irrational Euclidean rotation is distorted inside the group of $C^{2-\varepsilon}$-diffeomorphisms for any $\varepsilon > 0$. Requiring smoothness, Avila proved in \cite{Av} that irrational rotations are
distorted  in $ {\textit {Diff }}^{\infty} (\mathbb S^1)$. In higher dimensions, Militon (see \cite{Mil}, Theorem 1) showed that irrational translations of the
$d$-dimensional torus are distorted in $ {\textit {Diff }}^{\infty} (\mathbb T^d)$.

On the other hand, a significant consequence of non existence of distortion is the proof of the Zimmer conjecture in dimension 2: ''any action of $SL(3, \mathbb Z)$  by area preserving diffeomorphisms on a surface, has finite image''.
For instance, Polterovich (\cite{Pol}) and Franks-Handel(\cite{FH}) proved that $ {\textit {Diff }} ^{ 1}_{\mu}(\Sigma ^2)$ does not contain distortion, where $ \mu $ is a full support measure on a compact surface $\Sigma ^2$.

Novak (in \cite{No})  proved that there is no element of distortion in the  group of Intervals Exchange Transformations: bijections of the unit interval that are piecewise increasing and isometric.

In this work, we deal with a closely related problem, namely the existence of distortion elements  inside the group of AIETs: Affine  Intervals Exchange Transformations, denoted by $\mathcal A$. Roughly speaking an AIET is a bijection of the unit interval that is  increasing and affine on a finite number of intervals.  If the endpoints of these intervals and their images are rational points, then the AIET is called a rational AIET. The set of rational AIETs is a subgroup of $\mathcal A$  and it is denoted by  $\mathcal A_{\mathbb Q}$.

Finitely generated groups of AIETs
have provided several algebraically
interesting groups, as for instance the classical Thompson's groups $F$, $T$ and $V$ \cite{CFP} as well as some of
their generalized versions \cite{Hi}, \cite{BS}, the fundamental groups of orientable surfaces \cite{Ghy1}, the modular
group \cite{Nav3}, wreath products of the form $\mathbb Z\wr  \mathbb Z \wr  ... \wr  \mathbb Z $ (\cite{Ble}, \cite{Bri}, \cite{Nav1}, \cite{Nav2}) etc.

\medskip

Our main result proves that  most elements of $\mathcal A$ are in fact undistorted.

\begin{theo}\label{TH}  For every distorted element $f$ of $\mathcal A$, there exists an integer $k>0$, such that $f^ k$ is conjugate by an element of  $\mathcal A$ to a
product of infinite order restricted rotations, with pairwise disjoint supports. \end{theo}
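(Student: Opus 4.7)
\emph{Step 1 (Slope bound from distortion).} Assume $f$ is distorted in a finitely generated subgroup $G=\langle g_1,\ldots,g_s\rangle\subset\mathcal{A}$, so $|f^n|_S=o(n)$. Let $\Lambda\subset\mathbb{R}_{>0}$ be the finite set of slopes of the $g_i^{\pm 1}$ and set $M=\max_{\lambda\in\Lambda}|\log\lambda|$. Writing $f^n$ as a word of length $|f^n|_S$ in the generators, the chain rule tells us that at any point $x$ where $f^n$ is smooth, the slope $Df^n(x)$ is a product of at most $|f^n|_S$ elements of $\Lambda$. Hence
$$|\log Df^n(x)|\leq M\,|f^n|_S=o(n)\quad\text{uniformly in }x.$$

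\emph{Step 2 (Consequences on invariant measures).} Since $\log Df^n(x)=\sum_{i=0}^{n-1}\log Df(f^i(x))$ is sublinear in $n$, Birkhoff's ergodic theorem applied to the piecewise constant function $\log Df$ gives $\int\log Df\,d\mu=0$ for every $f$-invariant Borel probability measure $\mu$. In particular $f$ admits no hyperbolic periodic orbits, and the slope around any periodic point of period $p$ equals $1$, so a suitable iterate of $f$ is the identity in a neighborhood of every periodic point. I would then invoke the standard dynamical decomposition of an AIET: outside the (finite) forward orbits of its break points, $[0,1]$ decomposes into finitely many closed $f$-invariant subsets on each of which the dynamics is either periodic or topologically minimal.

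\emph{Step 3 (Minimal components are conjugate to rotations).} The core step is to show that on every minimal component $K$, $f|_K$ is conjugate \emph{within $\mathcal{A}$} to a standard rotation of the circle obtained by identifying the endpoints of $K$. First, the slope bound forbids wandering intervals: if $J\subset K$ were wandering, its iterates $f^n(J)$ would have total length at most $1$, forcing $\prod_{i<n}Df(f^i(x))$ to decay at some definite exponential rate along the orbit of some $x\in J$, contradicting $\log Df^n(x)=o(n)$. By a Denjoy-type theorem for AIETs, minimality together with the absence of wandering intervals implies that $f|_K$ is topologically conjugate to an irrational rotation; the vanishing of $\int\log Df\,d\mu$ for the unique invariant measure then upgrades this (through a standard cohomological/piecewise-affine argument) to a conjugacy by an element of $\mathcal{A}$ with a genuine restricted rotation.

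\emph{Step 4 (Assembly).} Choose $k>0$ that is a common multiple of the periods of all periodic components and that kills the finite-order rotations appearing on minimal components. Then $f^k$ is the identity on the closure of the periodic locus and restricts on each remaining minimal component to an infinite-order restricted rotation; gluing the piecewise conjugating maps from Step~3 produces an element of $\mathcal{A}$ conjugating $f^k$ to the desired product of infinite-order restricted rotations with pairwise disjoint supports.

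\emph{Main obstacle.} The crux is Step~3: translating the purely metric estimate $\log Df^n=o(n)$ into a structural statement (no wandering intervals, minimality~$\Rightarrow$~rotation) and, above all, ensuring that the conjugacy produced lies in $\mathcal{A}$ rather than being merely a homeomorphism. Controlling the conjugating map's piecewise-affine nature, and matching it consistently across all minimal components after choosing the iterate $k$, will require the finest use of the slope bound from Step~1.
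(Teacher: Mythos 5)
Your Steps 1--2 are sound in spirit and actually parallel Section~3 of the paper (the semi-hyperbolicity obstruction), but from Step~2 onward the proposal has genuine gaps that make Step~3 fail as stated.

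\textbf{Missing structural input: break-point growth.} You never address the growth of $\#BP(f^n)$. This is where the paper does the real structural work: Section~4 shows $\#BP(f^n)$ is either linear or bounded, and distortion forces it to be bounded. Only with this bound can the paper reduce (Section~5, the extended Li-type theorem) to a product of \emph{restricted PL-homeomorphisms} on disjoint intervals, each with at most one interior discontinuity --- i.e.\ genuine PL circle maps. Without that reduction, $f$ restricted to a minimal component is not a circle homeomorphism (it may have many break points of $BP_0$), so your invocation of a ``Denjoy-type theorem for AIETs'' has no object to apply to. Your wandering-interval argument does not substitute for the break-point control; it is addressing a different issue.

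\textbf{The fatal gap in Step~3.} You claim that $\int\log Df\,d\mu=0$ for the unique invariant measure, together with a topological conjugacy to a rotation, ``upgrades (through a standard cohomological/piecewise-affine argument) to a conjugacy by an element of $\mathcal{A}$.'' This is false. Minakawa's exotic circles $B_{\lambda_1,\lambda_2}$ (the normal form produced in Section~6) satisfy $\int\log DB\,d\mu=(1-\rho)\log\lambda_1+\rho\log\lambda_2$, and this vanishes for plenty of choices with $\lambda_1\neq 1\neq\lambda_2$ (e.g.\ $\lambda_2=\lambda_1^{-(1-\rho)/\rho}$). Such maps are topologically conjugate to the irrational rotation $R_\rho$ by the exponential change of variable $x\mapsto(\omega^x-1)/(\omega-1)$, but they are \emph{not} PL-conjugate to rotations --- that is precisely Minakawa's classification of exotic PL circles. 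So the scalar quantity $\int\log Df\,d\mu$ (equivalently the linear growth rate of $\log Df^n$) is not a strong enough invariant to detect the slopes. The paper's Section~7 circumvents this by passing to the free abelian multiplicative group $\Lambda_G$ generated by all slopes of the generators, fixing a basis $\alpha_1,\ldots,\alpha_s$, and looking at the vector of \emph{integer} exponents $\mathcal{N}_j(D_+B^n(x))$. Birkhoff then gives $\frac{\mathcal{N}_j(D_+B^n(x))}{n}\to\rho(\delta_j-\beta_j)+\beta_j$, and because the $\beta_j,\delta_j$ are integers and $\rho$ is irrational, this limit can vanish for every $j$ only if $\beta_j=\delta_j=0$ for all $j$, i.e.\ $\lambda_1=\lambda_2=1$. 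It is the integrality of the coordinates (not just the real Lyapunov exponent) that rules out the cancellation, and the contradiction with distortion then runs through the bounded growth of $|\mathcal{N}_j(D_+B^n(x))|\leq l_n\cdot S$. Your plan has no substitute for this; you would need to rediscover both the Minakawa normal form and the vector-valued slope argument.

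In short: insert the break-point-growth dichotomy before anything Denjoy-flavoured, reduce to restricted PL-homeomorphisms, normalize to a two-slope map $B_{\lambda_1,\lambda_2}$ by PL conjugacy, and then replace the scalar $\int\log Df\,d\mu=0$ by the multi-coordinate obstruction over $\Lambda_G$.
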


This description of distorted elements of  $\mathcal A$  enables us to prove the following statements.

\begin{theo}\label{BS}
The Baumslag-Solitar groups $BS(n,m)=<a,b \ \vert  \ ba^nb^{-1}=a^m>$ with $m$, $n$ integers and $\vert m\vert \not=\vert n\vert$ do not act faithfully via elements of $\mathcal A$.
\end{theo}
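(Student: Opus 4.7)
The plan is to locate a distortion element in $BS(n,m)$, apply Theorem \ref{TH} to its image under a supposed faithful action, and then exploit the Baumslag-Solitar relation together with the rigid structure produced by Theorem \ref{TH} to reach a contradiction. Assume $m,n$ are nonzero; by the isomorphism $BS(n,m) \cong BS(m,n)$ given by $b \mapsto b^{-1}$, we may moreover assume $|m| > |n| \geq 1$. Iterating the defining relation gives $b^k a^{n^k} b^{-k} = a^{m^k}$, so $|a^{m^k}| \leq 2k + n^k$ in the word metric on $\{a,b\}$, and $(2k + n^k)/m^k \to 0$ shows $a$ is distorted. Since $a$ has infinite order, under any faithful $\phi : BS(n,m) \to \mathcal{A}$, the element $\phi(a)$ is an infinite order distortion element of $\mathcal{A}$; Theorem \ref{TH} furnishes $K \geq 1$ and $\eta \in \mathcal{A}$ such that $G := \eta\, \phi(a)^K \eta^{-1}$ factors as $G_1 \cdots G_p$, each $G_i$ an infinite order restricted rotation on an interval $I_i$ (with irrational rotation number $\rho_i$) and the $I_i$ pairwise disjoint. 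Raising the relation to the $K$-th power and conjugating by $\eta$ yields the identity $H G^n H^{-1} = G^m$ in $\mathcal{A}$, where $H := \eta\, \phi(b)\, \eta^{-1}$.

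The core technical step is to prove that $H$ permutes the intervals $I_i$ via some permutation $\sigma$, with $H|_{I_i}$ a constant-slope AIET from $I_i$ to $I_{\sigma(i)}$ of slope $|I_{\sigma(i)}|/|I_i|$. Since the supports of $G^n$ and $G^m$ both equal $\bigsqcup I_i$, $H$ preserves this union. For each $j$, $H^{-1}(I_j)$ is a finite union of intervals and is $G^n$-invariant; unique ergodicity of the irrational rotation $G^n|_{I_i}$ implies $H^{-1}(I_j) \cap I_i$ has Lebesgue measure $0$ or $|I_i|$, so $H^{-1}(I_j)$ is a union of $I_i$'s. A length-balance via unique ergodicity (the fraction of a $G^n$-orbit in each affine piece of $H$ inside $I_i$ must equal the fraction of the corresponding image orbit in its $I_j$, forcing the slope on any piece of $I_i$ mapping into $I_j$ to equal $|I_j|/|I_i|$) then rules out $H(I_i)$ spanning more than one $I_j$, completing the structural claim. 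This is the main obstacle because $H$ is a bijection but generally not a homeomorphism of $[0,1]$, so the image of an orbit closure under $H$ cannot be read off by continuity alone.

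Given the permutation $\sigma$, the constant-slope AIET $H|_{I_i}$ conjugates the rotation $G_i^n$ of rotation number $n\rho_i$ to the rotation $G_{\sigma(i)}^m$ of rotation number $m\rho_{\sigma(i)}$; a scaled isometry of intervals preserves rotation numbers modulo one, so $n\rho_i \equiv m\rho_{\sigma(i)} \pmod{1}$. Selecting any cycle $i_1 \to i_2 \to \cdots \to i_\ell \to i_1$ of $\sigma$ and iterating the congruence around it yields $\bigl(1 - (n/m)^\ell\bigr)\rho_{i_1} \in \mathbb{Q}$. The hypothesis $|n| \neq |m|$ forces $(n/m)^\ell \neq 1$, so the coefficient is a nonzero rational, making $\rho_{i_1}$ rational and contradicting the irrationality of $\rho_{i_1}$. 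No faithful action of $BS(n,m)$ on $\mathcal{A}$ exists.
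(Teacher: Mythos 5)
Your proposal is correct and reaches the same contradiction, but the final argument differs from the paper's. The paper (Lemma~\ref{comm}) pushes the ergodic $a^m$-invariant probabilities $Leb\vert_{I_i}$ forward under $b$, observes that the resulting measures must again be the $Leb\vert_{I_j}$'s by ergodicity of the pushforward, and then passes to a power $b^s$ that \emph{fixes} each $I_i$ while preserving $Leb\vert_{I_i}$. This reduces everything to a single interval, where the spectrum of the irrational rotation (the cyclic group $\langle e^{2i\pi\alpha_i/\ell_i}\rangle$) is invariant under the measure--preserving conjugacy, giving $m^s \equiv \pm n^s$ up to the irrational factor, a contradiction. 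You instead keep the full permutation $\sigma$, extract constant slope on each $I_i$, and run a congruence $n\rho_i \equiv m\rho_{\sigma(i)} \pmod 1$ around a cycle of $\sigma$ to force $\rho_{i_1}\in\mathbb Q$. Your structural step is a bit longer than the paper's (the paper's ergodic--decomposition phrasing gives $b_\star(Leb\vert_{I_i}) = Leb\vert_{I_{\sigma(i)}}$ in one line, from which both the permutation and the constant slope follow immediately), but avoiding the passage to $b^s$ in favor of the cycle is a pleasant alternative.

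One point needs patching. You conclude the slope of $H$ on every affine piece of $I_i$ equals $|I_{\sigma(i)}|/|I_i|$, and then treat $H\vert_{I_i}$ as ``a scaled isometry of intervals'' to invoke topological invariance of the rotation number. Constant slope does not imply continuity on the interior of $I_i$: after affine normalization $H\vert_{I_i}$ is only an IET of $I_i$, and a general IET is not a circle homeomorphism, so you cannot read off rotation--number invariance directly. What you \emph{do} have is a measure--preserving (after normalization) conjugacy between two irrational rotations, and the spectral classification of rotations (exactly the tool the paper uses) gives $n\rho_i \equiv \pm m\rho_{\sigma(i)} \pmod 1$ with a possible sign. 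Fortunately the ambiguous signs propagate harmlessly around the cycle: you get $(n^\ell \mp m^\ell)\rho_{i_1}\in\mathbb Z$, and $|n|\neq|m|$ rules out both $n^\ell = m^\ell$ and $n^\ell = -m^\ell$, so the contradiction survives. With that replacement of ``preserves rotation number'' by ``preserves rotation number up to sign, by spectral theory of the measure--preserving conjugacy,'' your argument is complete.

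Two small remarks. Replacing $b$ by $b^{-1}$ identifies $BS(n,m)$ with $BS(m,n)$, so assuming $|m|>|n|$ is fine; and, as in the paper, one implicitly assumes $m,n\neq 0$ (the standard Baumslag--Solitar convention), since otherwise $a$ has finite order and the distortion step has nothing to say.
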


\begin{theo}\label{NIL}
Every torsion free nilpotent subgroup of $\mathcal A$ is  abelian and every finitely generated nilpotent subgroup of $\mathcal A$ is virtually abelian.
\end{theo}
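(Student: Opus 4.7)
The plan is to prove both statements by contradiction. The core idea is to extract, inside any non-abelian torsion-free nilpotent subgroup, a central distorted element $z$, apply Theorem \ref{TH} to write a power of $z$ as a product of infinite-order restricted rotations, and then derive a contradiction from the abelian nature of the centralizer of an irrational circle rotation.

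For the first statement, suppose $N\subset\mathcal A$ is torsion-free nilpotent but not abelian. I first reduce to a class-$2$ setting: letting $\Gamma_j(N)$ denote the $j$-th term of the lower central series and choosing $j$ smallest with $\Gamma_{j+1}(N)\subset Z(N)$, pick $x\in\Gamma_j(N)$ and $y\in N$ with $z:=[x,y]\ne 1$; then $z\in\Gamma_{j+1}(N)\subset Z(N)$, so $N_0:=\langle x,y\rangle$ is a finitely generated, torsion-free, class-$2$ nilpotent group with $z$ central and non-trivial. The class-$2$ identity $[x^n,y^n]=z^{n^2}$ gives $\ell_{N_0}(z^{n^2})\le 4n$, so $z$ is distorted in $N_0$, hence in $\mathcal A$.

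Applying Theorem \ref{TH} to $z$ and conjugating $N_0$ by the resulting element of $\mathcal A$, I may assume $z^k=r_1\cdots r_m$ is a product of infinite-order restricted rotations with pairwise disjoint supports $I_1,\dots,I_m$. Centrality of $z$ forces every $h\in N_0$ to commute with $z^k$, hence to permute the $I_i$, giving a homomorphism $\rho:N_0\to S_m$ with finite image, say of order $M$; then $x^M,y^M\in\ker\rho$ fix each $I_i$ setwise. The key rigidity step is that each $r_i|_{I_i}$ is topologically conjugate to an irrational circle rotation, whose centralizer in $\mathrm{Homeo}^+(S^1)$ is the abelian group of rotations, so $x^M|_{I_i}$ and $y^M|_{I_i}$ must commute. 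Hence $[x^M,y^M]|_{I_i}=\mathrm{id}$, and the class-$2$ identity $[x^M,y^M]=z^{M^2}$ then gives $z^{M^2}|_{I_i}=\mathrm{id}$. But $z|_{I_i}$ itself lies in this abelian centralizer and satisfies $(z|_{I_i})^k=r_i|_{I_i}$, which has infinite order, so $z|_{I_i}$ has infinite order and $z^{M^2}|_{I_i}\ne\mathrm{id}$ --- a contradiction.

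For the second statement, every finitely generated nilpotent group has a torsion-free subgroup of finite index (it is polycyclic and thus admits a poly-$\mathbb{Z}$ subgroup of finite index); applying the first statement to this subgroup shows it is abelian, so the original group is virtually abelian. The main obstacle I anticipate is justifying the centralizer rigidity on each $I_i$: one must check that any AIET of $I_i$ commuting with the irrational restricted rotation $r_i|_{I_i}$ lands in the topological (abelian) centralizer coming from the conjugation to a rigid rotation --- a classical fact, but one that must be recorded carefully in the AIET category since the conjugating map need not be affine.
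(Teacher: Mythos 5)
Your overall strategy matches the paper's: extract a central element $z=[x,y]$ of infinite order, use the class-$2$ identity $[x^n,y^n]=z^{n^2}$ to show $z$ is distorted, apply Theorem \ref{TH} to write a power of $z$ as a product of infinite-order restricted rotations with disjoint supports $I_i$, pass to a finite-index subgroup fixing each $I_i$, and derive a contradiction from commutativity of the centralizer of an irrational rotation. Your reduction of the second statement to the first (via a torsion-free finite-index subgroup of a polycyclic group) is a clean alternative to the paper's unified treatment, which simply chooses $c=[u,v]$ of infinite order directly in both cases. However, there is a genuine gap at the rigidity step, which you flag but mischaracterize. You invoke the fact that the centralizer of an irrational rotation in $\mathrm{Homeo}^+(S^1)$ is abelian, but $x^M|_{I_i}$ and $y^M|_{I_i}$ are AIETs of $I_i$ and are \emph{not} circle homeomorphisms a priori: they can have several interior discontinuities, so they do not live in $\mathrm{Homeo}^+(S^1)$ at all, and the issue is not (as you suggest) that the conjugating map to a rigid rotation fails to be affine. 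The correct route, which the paper follows, is measure-theoretic: by unique ergodicity of the irrational restricted rotation $r_i|_{I_i}$, any AIET of $I_i$ commuting with it must preserve $Leb|_{I_i}$, hence be an IET (this unique-ergodicity argument is also what underlies the permutation of the $I_i$, via Lemma \ref{comm}); then Novak's Lemma 5.1 (from \cite{No}) shows an IET commuting with an irrational rotation is itself a rotation, giving the required commutativity. Without that measure-theoretic input your appeal to the topological centralizer is circular, since one would first need to know $x^M|_{I_i}$ is continuous as a circle map, which is precisely what is being proved.
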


As corollary, the Heisenberg group and thereby $SL(3,\mathbb Z)$ do not act faithfully via elements of $\mathcal A$.

\smallskip

Last two theorems were proved by Higman for Thompson's group $V$ (see \cite{Hi} and \cite{Ro}, chapter 2).

\begin{theo}\label{ratio}
There is no distortion elements in $\mathcal A_{\mathbb Q}$.
\end{theo}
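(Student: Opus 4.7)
The plan is to derive a contradiction from Theorem~\ref{TH} combined with the rationality of $f$. Assume for contradiction that $f \in \mathcal{A}_{\mathbb{Q}}$ is a distortion element; since distortion in a subgroup implies distortion in any ambient group, $f$ is also distorted in $\mathcal{A}$, and Theorem~\ref{TH} furnishes $k \geq 1$, $\phi \in \mathcal{A}$, and infinite-order restricted rotations $R_1, \dots, R_n$ on pairwise disjoint supports $I_1, \dots, I_n$ with $f^k = \phi (R_1 \cdots R_n) \phi^{-1}$. Setting $J_j := \phi(I_j)$, I would first observe that the fixed set of $f^k$ equals $[0,1] \setminus \bigsqcup_j J_j$, since each infinite-order $R_j$ has no fixed point on $I_j$; and since $f^k \in \mathcal{A}_{\mathbb{Q}}$, on every affine piece the equation $ax+b=x$ has only rational solutions, so the fixed set is a finite union of rational points and rational closed intervals. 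Thus each $J_j$ has rational endpoints, and $g_j := f^k|_{J_j}$ is a rational AIET on a rational interval topologically conjugate via $\phi$ to an irrational rotation---hence minimal and of infinite order.

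The core of the argument is to rule out the existence of such a $g_j$, and I would split it according to whether all slopes of $g_j$ equal $1$. If so, $g_j$ is a rational IET on an interval with rational endpoints: choosing a common denominator $D$ of its break points and translation parts, $g_j$ permutes the finite collection of intervals of length $1/D$ inside $J_j$ and therefore has finite order, contradicting the infinite order inherited from the irrational rotation. If instead some slope of $g_j$ is a rational $a/b \neq 1$ with $b > 1$, fix a prime $p$ dividing $b$ and introduce the $p$-adic valuation $V_p(h)$ on $\mathcal{A}_{\mathbb{Q}}$ defined as the maximum $p$-adic valuation of denominators appearing among the break points, slopes and translation terms of $h$. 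Standard bookkeeping on composition (pre-images of break points and products of slopes) yields, for any finitely generated subgroup $H \ni f$ with generating set $S$, a linear upper bound $V_p(h) \leq C_S |h|_S$. Conversely, by minimality of $g_j$ (so its unique invariant measure $\mu$ gives positive mass to every slope piece) together with a Birkhoff analysis of the cocycle $v_p \circ g_j'$, one should obtain, for a carefully chosen prime $p$, a linear lower bound $V_p(f^{kn}) \geq c n$. Combining the two estimates gives $|f^{kn}|_S \geq (c/C_S)\, n$, contradicting the sublinear growth $|f^{kn}|_S = o(n)$ that distortion imposes.

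The main obstacle will be the linear lower bound $V_p(f^{kn}) \geq cn$, since one has to exclude systematic cancellation of $p$-adic contributions along orbits. The key input is the vanishing of the Lyapunov exponent, $\int \log g_j' \, d\mu = 0$, which decomposes as $\sum_p \alpha_p \log p = 0$ with $\alpha_p := \int v_p(g_j') \, d\mu$. If some $\alpha_p \neq 0$ one can select a prime $p$ with $\alpha_p < 0$ (which must exist, since the $\log p$ are positive and the weighted sum vanishes), and Birkhoff directly delivers linear growth of the denominator of the slope of $f^{kn}$ on a typical piece. The fully degenerate sub-case, in which every $\alpha_p$ simultaneously vanishes, forces $\mu$ to assign rationally related masses to the slope regions, and is handled by a separate rigidity argument excluding rational AIETs that are topologically conjugate to irrational rotations without being IETs.
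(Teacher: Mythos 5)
Your route differs sharply from the paper's. The paper's proof is only a few lines: it tracks rationality through the construction of Theorem~\ref{TH}, using Remark~\ref{remli} (the conjugating IET $E$ and the endpoints of the supports $I_i$ are rational) and Remark~\ref{remmina} (the Minakawa conjugacy $H$ can be chosen in $\mathcal A_{\mathbb Q}$), to conclude that $(HES)f^{pq}(HES)^{-1}$ and hence each restricted rotation $R_i$ lies in $\mathcal A_{\mathbb Q}$; a rational restricted rotation has finite order, contradicting Theorem~\ref{TH}. You never use the rationality of the conjugating map, only of $f$ itself, and this forces you to rule out by hand the existence of a rational AIET with several slopes on a rational interval that is topologically conjugate to an irrational rotation, which is a much stronger task than the paper ever needs to face.

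That is where a genuine gap appears. Case~1 (all slopes equal to $1$) is fine. In Case~2, the ``fully degenerate sub-case'' where $\alpha_p=0$ for every prime $p$ is dispatched by appeal to an unspecified ``rigidity argument excluding rational AIETs that are topologically conjugate to irrational rotations without being IETs.'' That statement, as written, is false: Boshernitzan's maps $B_{\lambda_1,\lambda_2}$ with rational $\lambda_1\neq\lambda_2$ and irrational rotation number are rational AIETs conjugate to irrational rotations and are not IETs (Remark~\ref{Bosh}). What rescues the \emph{two}-slope situation is that the invariant measure gives the two slope pieces masses $1-\rho$ and $\rho$ with $\rho$ irrational, so $\alpha_p=(1-\rho)v_p(\lambda_1)+\rho\,v_p(\lambda_2)$ vanishes for all $p$ only when $\lambda_1=\lambda_2=1$. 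But your $g_j$ has a priori more than two slopes, and with three or more pieces the system $\sum_\ell\mu(I_\ell)v_p(\lambda_\ell)=0$ for all $p$ has non-trivial solutions with some $\lambda_\ell\neq 1$: for instance slopes $2$, $1/2$, $1$ with the first two pieces carrying equal $\mu$-mass annihilate every $\alpha_p$. To reduce $g_j$ to two slopes you would need Proposition~\ref{PL}, but the $H$ it produces is not rational a priori; showing that it can be made rational is precisely the content of Remark~\ref{remmina}, which is the missing input in your argument. A secondary issue: since $\phi\in\mathcal A$ is generally discontinuous, $\phi(I_j)$ need not be a single interval, and $f^k$ may permute the rational components of its non-fixed set, so the identification of $g_j$ with a restricted PL-homeomorphism of a single rational interval also requires more care than you give it.
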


This theorem  extends to all groups of rational AIET,  results of Burillo-Cleary-R\"over (see \cite{BCR}) and Hmili-Liousse (see \cite{HL}) on non existence of distortion in Thompson's groups $V_n$. The main consequence of this theorem is that any group $G$ containing distortion elements has no faithful actions as rational affine interval exchange transformations. Moreover, if $G$ is almost-simple, such actions have finite image.

\medskip

\noindent {\bf This paper is organized as follows: }

\smallskip

\begin{itemize}
\item In Section 2,  definitions and basic facts are given.
\end{itemize}

\smallskip

\noindent In sections 3 to 8, we establish  propositions that  play an essential role in the proof of Theorem \ref{TH}, that will be given in section 9.

\begin{itemize}
\item  In section 3, we prove that elements of  $\mathcal A$ with semi-hyperbolic periodic orbits are undistorted.

\item  In section 4, it is shown that given $f\in \mathcal A$, the sequence whose general term is the number of break points of the iterate $f^n $ of $f$ (for simplicity, this sequence will be called ``number of  break points of  $f^n$ '' and it will be denoted by $\# BP(f^n)$) is either bounded or growths linearly. As a consequence, for any distortion element the number of break points of  $f^n $ is  bounded.
\end{itemize}

\smallskip

\noindent In following sections, we study  $f\in \mathcal A$ without semi-hyperbolic periodic orbit and with bounded number of break points of $f^n $.

\begin{itemize}

\item In section 5, Theorem \ref{LI} (Extended ''Alternate Version of Li's Theorem''),  we establish that such an $f$ has an iterate that is conjugate by an element of $\mathcal E$  to a product of restricted PL-homeomorphisms $f_i$ such that numbers of break points of $f_i^n$ are bounded.

\item  For such a PL-homeomorphism, in section 6, we apply results of Minakawa \cite{Min} to prove that it is  PL-conjugate to a PL-homeomorphism $B$ with at most two distinct slopes.

\item  In section 7, under the additional assumption that $f$ is distorted, we derive that $B$ is a rotation, by showing  that the slopes are $1$.

\smallskip

\item  In section 8, Theorem \ref{TH} is proved.

\item  Section 9 is devoted to prove applications of Theorem \ref{TH}: Theorems \ref{BS}, \ref{NIL} and \ref{ratio}.

\end{itemize}

{\bf Acknowledgments.} We are deeply indebted to Andres Navas for bringing this problem to our attention, for allowing us to resume work on the unpublished manuscript\footnote{{\sc Liousse, I. \&  Navas, A.} {\it Distortion elements in $PL_+(S^1)$} (2008).} and for several stimulating discussions during this work.

We  gratefully acknowledge several fruitful discussions with Ignacio Monteverde who also pointed out an error in a preliminary version of this work.

\section{Preliminaries.}

\subsection{Affine Interval Exchange Transformations of  $I=[0,1)$.}

\smallskip
\begin{defi} \

--- A bijection $f$ of  $[0,1)$ is an {\bf Affine Interval Exchange Transformation} (or {\bf
AIET})  of  $[0,1)$ if there exists a finite  subdivision
$0=a_0<a_1<....<a_p=1$ of $[0,1)$ such that for all
$i=0,...,p-1$, one has  $f(x) = \lambda_i x + \beta_i$ for $x\in [a_i,
a_{i+1})$, where $\lambda_i \in \Bbb{R^*_+}$ and  $\beta_i \in
\mathbb{R}$.

\smallskip

--- A {\bf break point} is either the initial point $0$ or a discontinuity of $f$ or  a discontinuity of $Df$, the  derivative  of  $f$.

\smallskip

--- The set of break points of $f$ is  denoted $\mathbf {BP}(f)$; it can be decomposed as the  union of $\mathbf {BP_0}(f)$, the set consisting of $0$ and the discontinuities  of $f$ and $\mathbf {BP_1}(f)$, the set of $0$ and  the discontinuities  of $Df$.

\smallskip

---  We define $\Delta_f(x)= f_+(x)-f_-(x)$, if  $x\in (0,1)$ and  $\Delta_f(0) =  f_+(0)-f_-(1)$, where $f_+(a)=\displaystyle \lim_{x\rightarrow a_+} f(x)=f(a)$
and $f_-(a)=\displaystyle \lim_{x\rightarrow a_-}f(x)$.

\smallskip

--- The $\lambda_i$'s  are the {\bf   slopes } of $f$.

\smallskip

---  The {\bf jump} of $f$ at $x$ is defined by $\sigma_f(x)= \frac{D_+f(x)}{D_-f(x)}$, if  $x\in (0,1)$ and $\sigma_f(0)= \frac{D_+f(0)}{D_-f(1)}$, where $Df_+(a)=\displaystyle \lim_{x\rightarrow a_+} \frac{f(x)-f_+(a)}{x-a}$
and $Df_-(a)=\displaystyle \lim_{x\rightarrow a_-} \frac{f(x)-f_-(a)}{x-a}$.

--- The sets of slopes and jumps of $f$ are denoted respectively by $\Lambda(f)$ and $\sigma(f)$.

\medskip

--- An AIET $f$ of $[0,1)$ is called an {\bf IET } if $\Lambda(f)=\{1\}$.

\end{defi}

\begin{defi} \

--- We denote by $\mathbf{\mathcal A}$ the group consisting of all AIETs of  $[0,1)$.

--- We denote by $\mathbf{\mathcal E}$ the group consisting of all IETs of  $[0,1)$.

\end{defi}

\begin{rema}\label{homeo} \

A homeomorphism $f$  of the circle  $\mathbb S^ 1 = [0, 1]/(0 = 1)$  can be seen as the  bijection of $[0, 1)$ defined by $ x  \mapsto f(x) \   (mod \    1)$.

\end{rema}

\smallskip

\begin{defi} \

When this bijection is an  AIET, $f$ is called  a {\bf PL-homeomorphism} of $[0, 1)$, even if it may not be continuous at eventually one point of $(0, 1)$.
\end{defi}

\noindent In what follows,  PL-homeomorphisms of $\mathbb S^ 1 $ will be seen as AIETs  of  $[0, 1)$. For example, the circle rotation by $\alpha$  is viewed as the element of $\mathcal A$, with break points $0$ and $1-\alpha$,  given by $f(x) =x+\alpha$ if $x\in [0,1-\alpha)$ and  $f(x) =x+\alpha-1$ if $x\in [1-\alpha,1)$ which is called  {\bf rotation} of $[0, 1)$.

\begin{defi}\label{RR} \

An IET $f$ is called a {\bf  restricted rotation} if there exists some interval $I=[a,b)\subset [0,1)$ such that the support of $f$ is $I$ and $f(x)= x+\delta$ if $x \in [a,b-\delta)$ and  $f(x)= x+\delta -b+a$ if $x \in [b-\delta,b)$, where $\delta \in \mathbb R$, $0<\delta<b-a$.

\medskip

An AIET, $f$ is called  a {\bf restricted PL-homeomorphism}  if there exists some interval $I=[a,b)\subset [0,1)$ such that the support of $f$ is $I$ and $\# (BP_0(f)\cap (a,b) )\leq 1$.
\end{defi}

\medskip

Here are some elementary properties of the sets of break points.

\begin{proper}\label{compBP} Let $f$ and $g$ be elements of $\mathcal A$.

--  $BP(f^ {-1})= f(BP(f)$,

--  $BP(f\circ g) \subset BP(g) \cup  g^ {-1}(BP(f))$,

--  $BP(f^n) \subset BP(f) \cup  f^ {-1}(BP(f)) \cup ... \cup f^ {-(n-1)}(BP(f)) $, for all integer $n\geq 0$.

These still hold for $ BP_0(f)$.
\end{proper}

Unfortunately, such formulas do not hold for $BP_1(f)$, this is due to the following

\begin{proper}\label{saut}
Let $f$, $g$ in $\mathcal A$ and $x\in [0,1)$, one has

$${\displaystyle \sigma}_{f\circ g} (x) = \frac { Df_+(g_+(x)) Dg_+ (x)}{ Df_-(g_-(x)) Dg_- (x)}=\frac { Df_+(g_+(x))}{ Df_-(g_-(x))}\times {\displaystyle \sigma}_{g} (x).$$

If $x \notin BP_0(g)$ then  ${\displaystyle \sigma}_{f\circ g} (x)= {\displaystyle \sigma}_{f} (g(x))\times {\displaystyle \sigma}_{g} (x)$.

For  $x \in BP_0(g)$,  ${\displaystyle \sigma}_{f\circ g} (x)\not= {\displaystyle \sigma}_{f} (g(x))\times {\displaystyle \sigma}_{g} (x)$,
in general.
\end{proper}

\begin{proper} \label{sautPL}
Let $f \in \mathcal A$ and  $g$ is a PL-homeomorphism, then

\quad $\forall x\in [0,1), {\displaystyle \sigma}_{f\circ g} (x)= {\displaystyle \sigma}_{f} (g(x))\times {\displaystyle \sigma}_{g} (x)$ and thereby  $BP_1({f\circ g})\subset g^{-1}(BP_1({f})) \cup BP_1(g)$.

\end{proper}
Indeed, if $g$ is a PL-homeomorphism and  $g_+(x)\not=g_-(x)$, then $g_+(x)=0$ and $g_-(x)=1$ therefore $\frac { Df_+(g_+(x))}{ Df_-(g_-(x))}={\displaystyle \sigma}_{f} (0)$.

\subsection{Interesting subgroups of $\mathcal A$.}

Numerous generalizations of Thompson's groups have been
defined and  studied: we recall, for example,
the works of Bieri-Strebel \cite{BS} and Higman \cite{Hi}.

\begin{defi}
Let $\Lambda \subset \mathbb R^{+*}$ be a multiplicative subgroup
and $A \subset \mathbb R$ be an additive subgroup, invariant by
multiplication by elements of $\Lambda $ and such that $1\in A$.
\smallskip

We define $V_{\Lambda,A}$ as the subgroup of $\mathcal A$
consisting of elements with slopes in $\Lambda$, break points and their images in $A$, and $\mathbf{\mathcal E}_{ A}$  as the subgroup of $\mathcal E$
consisting of elements with break points in $A$, in fact $\mathbf{\mathcal E}_{ A}=\mathbf{\mathcal E}\cap V_{\Lambda,A}$.

 The subgroup  ${ A}_{\mathbb Q}$ of rational AIETs is $V_{\mathbb Q_{>0}, \mathbb Q}$.
\end{defi}

\smallskip

\begin{defi}

Let $n$ be a positive integer, the group  $V_{<n>,\  \mathbb Z [{1}/{n}]}$ is denoted by $V_n$ and called  a {\bf Higman-Thompson's group}.
  \end{defi}

 Note that the classical Thompson's group $V$ arises as $V_{2}$.

 \smallskip

 Among many things,  Higman (see \cite{Hi}, \cite{Ro}) proved that $V_n$ is finitely presented and satisfies the conclusions of Theorems \ref{BS} and \ref{NIL}.

\begin{rema} As established in \cite{DFG}, Thompson's groups $F$, $T$ and $V$ are not subgroups of $\mathcal E$.\end{rema}

\begin{rema} Similarly, one can define AIETs of any interval $[a,b) \subset \mathbb R$. In anticipation of a more general use of results from \cite{Li}, \cite{No}, \cite{Min} stated for $[0,1)$, we note that there exists a unique direct affine map sending $[a,b)$ onto $[0,1)$. Thus any AIET of $[a,b)$ is affinely conjugate to an AIET of $[0,1)$ with the same slopes set. However, arithmetic properties of break points might not be preserved. \end{rema}

\begin{defi} Let $I=[a,b)$, We will denote by $\mathcal A (I)$ [resp. $\mathcal E (I)$] the group consisting of AIETs [resp. IETs] of $I$. \end{defi}

\subsection{Distortion.}

\medskip

\begin{defi} \

Let  $\Gamma$ be a  finitely generated  group and  $S=\{s_1,...,s_r\}$ be a finite generating set of $\Gamma$.

\smallskip

The smallest integer  $l$ such that $g=s_{i_1}^ {\epsilon_1}... s_{i_l}^ {\epsilon_l}$, with  ${\epsilon_j}\in \{ -1,1 \}$ is called the  {\bf length} of  $g$ relatively to $S$ and denoted by $l_S(g)$.  \end{defi}

We set $l_S(e)=0$. The function $l_S : \Gamma \rightarrow \mathbb N$ is invariant by taking inverse and satisfies: $l_S(gh)\leq l_S(g) +l_S(h)$. In particular, for all  $g$ in  $\Gamma$, the sequence  $l_S(g^ n)$ is sub-additive, thus the sequence  $\frac{l_S(g^ n)}{n}$ converges. This  leads to

\begin{defi} \

We say that $g$ is {\bf distorted} (or {\bf  of distortion}) in $\Gamma=<S>$ \ if  $g$ has infinite order and $\lim\limits_{n\rightarrow +\infty} \frac{l_S(g^ n)}{n}= 0$.
 \end{defi}

\begin{rema} \

The property of being distorted does not depend on the generating set.

\end{rema}

\begin{defi}

More generally,  if  $G$ is not finitely generated, an element
$g$ of  $G$  is said to be {\bf   distorted} in $G$ if it is a distortion element in some finitely generated subgroup of $G$.
\end{defi}

\begin{propri} \
\begin{itemize}
 \item The following properties are equivalent
\begin{enumerate}
 \item $g\in G$ is distorted in $G$,
 \item $\exists N\in \mathbb Z^*$ : $g^N$ is distorted in $G$,
 \item $\forall N\in \mathbb Z^*$ : $g^N$ is distorted in $G$.
\end{enumerate}

\item If $\Phi : \Gamma \rightarrow G$ is a morphism and  $g\in \Gamma$ is distorted in  $\Gamma$  then its image
$\Phi(g)\in G$ is either of finite order or distorted in  $G$.
\end{itemize} \end{propri}

\section{Semi-hyperbolicity prevents distortion.}

\begin{defi}
Let  $f\in \mathcal A$,  we say that  $p$ is {\bf a semi-hyperbolic periodic point} of period $l$, if either:

 --- $p$ is not a break point of $f^l$,  $f^l(p) = p$ and  $Df^l \not =1$  ({\bf hyperbolic}) or

 --- $p$ is  a break point of $f^l$, $f^l_+(p) = p$ and $Df_+^l(p) \not =1$ or  $f^l_-(p) =p$ and $Df_-^l (p)\not =1$ ({\bf virtual}).

\end{defi}

\begin{prop}\label{hypdist}
If $f\in \mathcal A$  has a  semi-hyperbolic periodic point then   $f$ is undistorted in $\mathcal A$.
\end{prop}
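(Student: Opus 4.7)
The plan is to derive a contradiction from the distortion hypothesis by estimating a one-sided derivative at a semi-hyperbolic periodic point. The point of departure is the observation that, in the group $\mathcal{A}$, the one-sided chain rule holds without any break-point restriction: since any $g \in \mathcal{A}$ is increasing, for $y \to x^+$ one has $g(y) \to g_+(x)$ from above, so
\[
D(f\circ g)_+(x) \;=\; Df_+\bigl(g_+(x)\bigr)\cdot Dg_+(x),
\]
and analogously from the left. By induction, for any product $w=s_{i_1}^{\epsilon_1}\cdots s_{i_n}^{\epsilon_n}$ and any $x\in[0,1)$,
\[
\bigl|\log Dw_\pm(x)\bigr| \;\leq\; n\cdot L_S,\qquad L_S:=\max_{s\in S\cup S^{-1}}\ \sup_{x\in[0,1)}\bigl|\log Ds_\pm(x)\bigr|,
\]
and $L_S<+\infty$ because each $s\in S^{\pm 1}$ has finitely many slopes.

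Replacing $f$ by $f^l$, one may assume that $p$ is a semi-hyperbolic fixed point of $f$, so that either $f(p)=p$ with $Df(p)=\lambda\neq 1$ (hyperbolic case), or $f_\varepsilon(p)=p$ with $Df_\varepsilon(p)=\lambda\neq 1$ for some $\varepsilon\in\{+,-\}$ (virtual case). In either case iterating $f$ keeps $p$ fixed on the appropriate side and, since $p$ is not a break point of $f$ on that side, the ordinary chain rule gives $Df^n_\varepsilon(p)=\lambda^n$; moreover $p\notin BP(f^n)$ (resp.\ $p\notin BP_0(f^n)$ on the side $\varepsilon$) by Property~\ref{compBP} applied to the fixed orbit $\{p\}$.

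Now suppose for contradiction that $f$ is distorted, so that $f$ lies in a finitely generated subgroup $\Gamma=\langle S\rangle$ of $\mathcal{A}$ with $l_S(f^n)/n\to 0$. Writing $f^n$ as a word of length $l_S(f^n)$ in $S^{\pm 1}$ and evaluating the one-sided derivative at $p$, the inequality above yields
\[
n\,|\log\lambda|\;=\;\bigl|\log Df^n_\varepsilon(p)\bigr|\;\leq\; l_S(f^n)\cdot L_S .
\]
Dividing by $n$ and letting $n\to\infty$, the left-hand side tends to $|\log\lambda|>0$ while the right-hand side tends to $0$, a contradiction.

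The only delicate point is the one-sided chain rule with $x$ possibly being a break point of one of the factors; this is handled by noting that $\mathcal{A}$ consists of orientation-preserving maps, so right (resp.\ left) limits compose with right (resp.\ left) limits, making Property~\ref{saut} irrelevant for one-sided derivatives. The rest is the classical Franks--Polterovich derivative obstruction, transplanted to the setting of AIETs.
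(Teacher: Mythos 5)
Your argument is correct and is essentially the same as the paper's: both proofs pass to a power so that $p$ becomes a semi-hyperbolic fixed point, use the one-sided chain rule to evaluate $D_\varepsilon f^n(p)=\lambda^n$, bound $\bigl|\log D_\varepsilon w(x)\bigr|$ by the word length of $w$ times a constant depending only on the generating set, and derive a contradiction with $l_S(f^n)/n\to 0$. The only difference is cosmetic: you justify the one-sided chain rule at break points explicitly (a point the paper leaves implicit, writing the product formula for $D_+f^n(p)$ directly), and you phrase the derivative bound via a uniform constant $L_S$ rather than via $\inf$ and $\sup$ of the generators' one-sided derivatives, but these are the same estimate.
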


\noindent{\bf Proof.} Let  $p$ be a semi-hyperbolic periodic point of $f$. W.l.o.g,  we can suppose that $f_+(p)=p$ and the right derivative of $f$ at  $p$ : $Df_{+} (p)=\lambda \not= 1$. For clarity, $Df_+$ will be denoted by $D_+ f$.

\medskip

By absurd, suppose that $f$ is distorted  in a subgroup $G$ of  $\mathcal A$ generated by  $S=\{g_1,...,g_s\}$. Then $f$ can be written as  $f^n =g_{i_{l_n}}... g_{i_1}$  with  $\displaystyle \lim_{n \rightarrow +\infty} \frac{l_n}{n}=0$.

\smallskip

We have:  $\displaystyle D_+f^n (p) = D_+g_{i_{l_n}} (p_{l_{n}}) \  ... \  D _+g_{i_1} (p_1)$, where  $p_1=p$ and $p_j=  g_{i_{j-1}} \ ... \  g_{i_1}(p)$, for $j=2,...,l_n$.

\medskip

Then $(\inf D_+g_i )^{l_n} \leq D_+f^n(p) \leq (\sup D_+g_i) ^{l_n}$ and

$$\frac{l_n \log( \inf D_+g_i )}{n} \leq \frac {\log (D_+f^n)(p)}{n} \leq\frac{ l_n \log (\sup D_+g_i)}{n},$$
 where $\displaystyle \inf D_+g_i = \infi_{i,x} D_+g_i (x )$ and  $\displaystyle \sup D_+g_i = \supr _{i,x} D_+g_i (x )$.
\medskip

As $f$ is distorted, one has $\displaystyle \lim_{n \rightarrow +\infty}  \frac {\log (D_+f^n)(p)}{n}=0$.

\smallskip

On the other hand, since $p$ is a fix point of $f$, $\displaystyle D_{+} f^n (p)=\lambda ^n$ and then

\noindent $\displaystyle \lim_{n \rightarrow +\infty}  \frac {\log (D_+f^n)(p)}{n}= \log \lambda \not=0$, this is a contradiction.\hfill $\square$

\section{Alternative for the growth of the number of break points.}

Recall that $BP(f)$ the set of break points of $f$ is the union of the two following sets:

$BP_1(f)=\{a \in [0,1) : \sigma_f(a) \not= 1\}\cup \{0\}$ and  $BP_0(f)=\{a \in [0,1) :  \Delta_f(a)
\not= 0\}\cup \{0\}$.

\smallskip

We denote by $\# BP_* (f)$ the cardinality of $ BP_* (f)$.

\medskip

According to Property \ref{compBP}, one has :
$$\#BP(f^n) \leq \#BP(f)\times n {\text {\ and \ } } \#BP_0(f^n) \leq \#BP_0(f)\times n.$$

 \begin{prop} \label{growth} If $f\in \mathcal A$, then either

 \begin{itemize}

 \item $\# BP(f^n)$ has linear growth or

 \item $\# BP(f^n)$ is bounded.

\end{itemize}

\end{prop}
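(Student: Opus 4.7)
By Property \ref{compBP}, $BP(f^n)\subset \bigcup_{k=0}^{n-1}f^{-k}(BP(f))$, so $\#BP(f^n)\leq n\cdot\#BP(f)$; the growth is at most linear, and the content of the proposition is that sublinear growth forces boundedness. My plan is to analyse the $f$-orbits of the break points: since $f$ is a bijection of $[0,1)$, each $b\in BP(f)$ has an orbit $\mathcal O(b)=\{f^k(b):k\in\mathbb Z\}$ which is either finite (so $b$ is $f$-periodic) or infinite.

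In the bounded case, if every $b\in BP(f)$ is $f$-periodic, then $C:=\bigcup_{b\in BP(f)}\mathcal O(b)$ is a finite set. Since $f^{-k}(BP(f))\subset C$ for every $k\geq 0$, Property \ref{compBP} gives $BP(f^n)\subset C$ for all $n$, hence $\#BP(f^n)\leq \#C$.

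In the linear-growth case, suppose some $b_0\in BP(f)$ has infinite orbit. The preimages $x_k:=f^{-k}(b_0)$, $k\geq 0$, are pairwise distinct, and, as $BP(f)$ is finite, only finitely many $j\in\mathbb Z$ satisfy $f^j(b_0)\in BP(f)$. So for $k$ large, the forward $f$-orbit of $x_k$ up to time $k$ meets $BP(f)$ along a fixed pattern of times, shifted by $k$. Using Property \ref{saut} one can express the cumulative jump $\sigma_{f^n}(x_k)$ and the cumulative discontinuity at $x_k$ in $f^n$ as combinations of finitely many local jumps and discontinuities of $f$; for large $k$ these cumulative quantities depend only on $n-k$. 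I would then argue that they are non-trivial for a positive density of pairs $(k,n)$ with $k<n$, whence $\#BP(f^n)\geq c\,n$.

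The main obstacle is the case of systematic cancellation: for the rotation $R_\alpha$, the break point $1-\alpha$ has infinite orbit, yet $\#BP(R_\alpha^n)=2$ for all $n$, because the discontinuity always cancels after one iterate. I expect to rule out this sub-case by showing that a systematic cancellation of both cumulative jumps and cumulative discontinuities imposes a rigid algebraic relation on $\{\sigma_f(b),\Delta_f(b)\}_{b\in BP(f)}$; this relation constrains the candidate preimages to lie in a finite $f$-invariant set, returning us to the bounded case.
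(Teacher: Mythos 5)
There is a genuine gap, and it lies exactly at the point you flag as "the main obstacle." You set up the dichotomy as ``all break points periodic $\Rightarrow$ bounded'' versus ``some break point has infinite orbit $\Rightarrow$ linear growth (modulo ruling out cancellation),'' and you then hope that systematic cancellation forces the preimages $f^{-k}(b_0)$ into a finite $f$-invariant set, collapsing the cancellation sub-case back into the periodic one. This cannot work, and your own example shows why: for $R_\alpha$ with $\alpha$ irrational, the preimages $R_\alpha^{-k}(1-\alpha)$ are pairwise distinct and dense in $[0,1)$, nowhere near a finite invariant set, yet $\#BP(R_\alpha^n)=2$ for all $n$. Cancellation is not a degenerate sub-case to be ruled out; it is the second genuine source of boundedness, coexisting with infinite orbits, and it has to be \emph{accepted} and shown to yield a uniform bound.

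The paper's proof handles this by the dichotomy at the level of each non-periodic orbit, not at the level of orbit finiteness. For an initial break point $a$ with segment $S_a=\{a,\dots,f^{N_a}(a)\}$ containing all break points in its orbit, one shows (your ``depends only on $n-k$'' observation, made precise as $\Delta_{f^n}(f^{-k}(a))=\Delta_{f^{n-k}}(a)$ for $0\le k\le n-1$, and a parallel formula for the jump via the product $\Pi_a$) that the relevant quantity either vanishes for all $n-k>N_a$ or is non-zero for all $n-k>N_a$: an all-or-nothing statement, not a positive-density one. In the vanishing case the orbit of $a$ contributes at most $2N_a$ break points to $f^n$ \emph{for every} $n$, despite the orbit being infinite, because the preimages $f^{-k}(a)$ simply fail to be break points of $f^n$ — the discontinuity/jump cancels, it does not return to a finite set. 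Boundedness then holds if this vanishing occurs for \emph{every} non-periodic initial break point; linear growth holds if it fails for even one. Your plan would need to be rewritten to accept ``infinite orbit with cancellation'' as a boundedness case in its own right, rather than trying to exclude it.
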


\noindent{\bf Proof.} Let $a\in BP(f)$.

\smallskip

\begin{proper}   If $a$ is a $f$-periodic point, then for all integer $n$, the set  $ BP(f^n)\cap \mathcal O_f(a)$ is finite and has  cardinality less or equal than the period of $a$.
\end{proper}

\begin{proper}\label{initialBP}  \

 \begin{enumerate}
 \item If $a$ is  not a $f$-periodic point, then there exists a segment $S_a$ of the orbit of $a$: $$\{ b=f^ {-p}(a), f^ {-p+1}(a),..., a, ..., f^ l(a)=c \}, $$
such that $b$ and $c$ belong to $BP(f)$  and for all $k\in \mathbb N^ *$, $f^  {-k}(b) \notin BP(f)$ and $f^  {k}(c) \notin BP(f)$. Such a break point $b$ is called  {\bf initial} break point. Therefore:
\item If $a$ is an  initial break of $f$  point then $S_a=\{ \ a,\  f(a), \ ...\ , \   f^{N_a}(a)\ \}$  and

\noindent  $\mathcal O_f(a)\cap BP (f^n) \subset \{f^{-(n-1)}(a), ..., f^{N_a}(a) \}$, for all integer $n\geq 0$, in particular:

\noindent (a)- $f^{-k}(a)\notin BP(f^m)$, for all integers $k\geq m\geq 0$,

\noindent (b)-  $f ^{\ k}\ (a)\notin BP(f^m)$, for all integers $m\geq 0$, $k>N_a$  and

\noindent (c)-  $f^{-k}(a) \notin  BP(f^{-p})$, for all integers $p\geq 0$, $k\geq 0$.
\end{enumerate}
\end{proper}

 \smallskip

 Indeed, because $\# BP(f)$ is finite, if (1) does not hold  then  there would exist some $d\in BP(f)$, $m_1$ and $m_2$ distinct integers such that  $d=f^ {m_1}(a)=f^ {m_2}(a)$,  which contradicts the non periodicity of $a$. We  derive the second item from Property \ref{compBP}.

%%%%%%%%%%%%%%%%%%%%%%%%%%%%%%%%%%%%%%%%%%%%%%%%%%%%%%%%%%%%%%%%%%%%%%%%%%%

\medskip

\noindent {\bf Step 1: Alternative for $\# BP_0(f^n)$}.

\medskip

Let $a\in BP_0 (f)$ be  a non periodic initial break point and $S_a=\{a,f(a), ...,f^{N_a}(a) \}$; for simplicity of notation, we set $N=N_a$.

\smallskip

\begin{clai} \label{Delta} \

\begin{itemize}

\item If  $\Delta_{f^{N+1}}(a) = 0  $ then for all integer  $l\geq 1$,   $\Delta_{f^{N+l}}(a) = 0  $.

\item  If  $\Delta_{f^{N+1}}(a) \not= 0  $ then for all  integer $l\geq 1$, $\Delta_{f^{N+l}}(a) \not= 0$.
\end{itemize}
\end{clai}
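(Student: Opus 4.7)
The plan is to track the two one-sided iterates of $a$ under $f^n$. Set $v_n := (f^n)_+(a)$, which equals $f^n(a)$ by right-continuity of AIETs, and $u_n := (f^n)_-(a) = \lim_{x \to a^-} f^n(x)$, so that $\Delta_{f^n}(a) = v_n - u_n$. Since $f$ is piecewise affine with positive slopes, $f^n(x)$ approaches $u_n$ from below as $x \to a^-$, yielding by induction the recursions $v_{n+1} = f(v_n)$ and $u_{n+1} = f_-(u_n)$. By Property \ref{initialBP}, $v_{N+k} = f^{N+k}(a) \notin BP(f)$ for every $k \geq 1$; in particular $v_{N+k} \notin BP_0(f)$, so $f_-(v_{N+k}) = f(v_{N+k}) = v_{N+k+1}$.

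For the first bullet, assume $u_{N+1} = v_{N+1}$; an immediate induction on $l$ then gives $u_{N+l+1} = f_-(u_{N+l}) = f_-(v_{N+l}) = f(v_{N+l}) = v_{N+l+1}$, hence $\Delta_{f^{N+l}}(a) = 0$ for all $l \geq 1$. For the second bullet, I argue by contradiction: assume $u_{N+1} \neq v_{N+1}$ but $u_{N+l} = v_{N+l}$ for some minimal $l \geq 2$. The equation $f_-(u_{N+l-1}) = f(v_{N+l-1})$ together with $u_{N+l-1} \neq v_{N+l-1}$ forces $u_{N+l-1} \in BP_0(f)$, for otherwise $f_-(u_{N+l-1}) = f(u_{N+l-1})$ and injectivity of $f$ would give $u_{N+l-1} = v_{N+l-1}$, contradicting the minimality of $l$.

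The main obstacle is to derive a contradiction from $f_-(u_{N+l-1}) = f(v_{N+l-1})$ when $u_{N+l-1} \in BP_0(f)$ and $v_{N+l-1} \notin BP(f)$. For this I exploit the partition structure of $f$: the images $[f(a_i), f_-(a_{i+1}))$ of the pieces $[a_i, a_{i+1})$ partition $[0,1)$ into half-open intervals, so the right-limit value $f_-(u_{N+l-1})$, being excluded from its own piece's image, must coincide with the left endpoint $f(a_j)$ of some other image piece, where $a_j \in BP(f)$. Bijectivity of $f$ then gives $v_{N+l-1} = a_j \in BP(f)$, contradicting $v_{N+l-1} \notin BP(f)$, since $N+l-1 \geq N+1$. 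The wraparound sub-case where $f_-(u_{N+l-1}) = 1$ is treated similarly, after identifying $0 \sim 1$ and noting that $0 \in BP(f)$ by convention.
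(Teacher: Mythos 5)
Your proof is correct and rests on the same underlying idea as the paper's: continuity of $f$ at the iterates $f^{N+k}(a)$ for $k\geq 1$ (Property \ref{initialBP}(2b)), combined with injectivity, keeps the right value $v_n$ and the left‑limit value $u_n$ apart once they differ. The paper applies $f^{l-1}$ in one jump and simply asserts that $f^{l-1}(f^{N+1}(a))$ cannot equal $(f^{l-1})_{-}(c)$ for $c\neq f^{N+1}(a)$; you obtain the same conclusion one step at a time via a minimal‑$l$ contradiction and justify the collision step explicitly by the partition of $[0,1)$ into the image pieces $[f(a_i),f_-(a_{i+1}))$, which is a cleaner way to make that injectivity assertion rigorous.
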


\smallskip

Indeed, let $l\geq 1$, $f_+^{N+l} (a)= f^{N+l} (a)= f^{l-1} (f^{N+1}(a))$ and $f_-^{N+l} (a)= f_-^{l-1} (f_-^{N+1}(a))$.

\smallskip

-- If $\Delta_{f^{N+1}}(a) = 0  $ then $ \Delta_{f^{N+l}}(a)= f^{l-1} (f^{N+1}(a))-f_-^{l-1} (f^{N+1} (a))=0$, since $f^{l-1}$ is continuous at  $f^{N+1} (a)$ by Property \ref{initialBP} (2b).

-- If $\Delta_{f^{N+1}}(a) \not= 0  $, as $f^{l-1}$ is continuous at  $f^{N+1} (a)$, the point $f^{l-1} (f^{N+1}(a))$ can not be equal to $f_-^{l-1} (c)$, for some $c\not=f^{N+1} (a)$. It follows that $f^{N+l}(a)= f^{l-1} (f^{N+1}(a)) \not=f_-^{l-1} (f_-^{N+1}(a))=f_-^{N+l}(a)$,  since $f^{N+1}(a) \not=f_-^{N+1}(a)$.

\bigskip

We turn now to the proof of Step 1, estimating $\#BP_0(f^n)$ for a given   positive integer $n$.

 By Property \ref{initialBP} (2), $\mathcal O_f(a)\cap BP (f^n) \subset \{\ f^{-(n-1)}(a),\  ... \  a , \  ... \ , f^{N}(a) \ \}$.

\smallskip

Let us compute  $\Delta_{f^{n}} (f^{-k}(a)) $, for $0\leq k\leq n-1$, we get:
$$\Delta_{f^{n}} (f^{-k}(a)) = f^{n} _+ (f^{-k}(a))-f^{n}_- (f^{-k}(a)) = f^{n-k}(a)-f^{n-k}_-(f^{k}_-(f^{-k}(a)).$$
Moreover, for all $k\geq 0$, one has $f^{k}_-(f^{-k}(a))= a$,
according to Property \ref{initialBP} (2a).

\medskip

Finally,  $ \Delta_{f^{n}} (f^{-k}(a)) =  f^{n-k}(a)-f^{n-k}_-(a)= \Delta_{f^{n-k}}(a)$.

\bigskip

Summarizing, we have:
\begin{lemm} \label{delta2}
Let $a$ be an initial break point and $n$ be a positive integer.
$$ \Delta_{f^{n}} (f^{-k}(a))=\Delta_{f^{n-k}}(a){\text {, for }} 0\leq k\leq n-1.$$
\end{lemm}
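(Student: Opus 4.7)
The plan is to unfold the defining formula $\Delta_{f^n}(x)=f^n_+(x)-f^n_-(x)$ at the point $x=f^{-k}(a)$ and then simplify the two one-sided limits separately, using the information on initial break points supplied by Property \ref{initialBP}.

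For the right limit, I would observe that AIETs are right-continuous by definition, so
$$f^n_+\bigl(f^{-k}(a)\bigr)=f^n\bigl(f^{-k}(a)\bigr)=f^{n-k}(a).$$
This requires no hypothesis on $a$; it is just the evaluation of $f^n$ at a point.

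For the left limit, the key idea is to factor $f^n=f^{n-k}\circ f^k$ and write
$$f^n_-\bigl(f^{-k}(a)\bigr)=f^{n-k}_-\bigl(f^k_-(f^{-k}(a))\bigr).$$
At this point I would invoke Property \ref{initialBP}(2a), which states that $f^{-k}(a)\notin BP(f^k)$ whenever $a$ is an initial break point; in particular $f^k$ is continuous at $f^{-k}(a)$, and hence
$$f^k_-\bigl(f^{-k}(a)\bigr)=f^k\bigl(f^{-k}(a)\bigr)=a.$$
Substituting back yields $f^n_-(f^{-k}(a))=f^{n-k}_-(a)$, and therefore
$$\Delta_{f^n}\bigl(f^{-k}(a)\bigr)=f^{n-k}(a)-f^{n-k}_-(a)=\Delta_{f^{n-k}}(a),$$
which is the claimed equality.

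The only delicate point is the application of Property \ref{initialBP}(2a): one must ensure that the hypothesis ``$a$ is initial'' (i.e. no negative iterate of $a$ lies in $BP(f)$) is strong enough to guarantee that $f^k$ has no discontinuity at $f^{-k}(a)$, so that the factorization $f^n_-=f^{n-k}_-\circ f^k_-$ actually produces $a$ in the inner slot instead of some other preimage under $f^k_-$. This is precisely what initial break points are designed for, and once that observation is in place the lemma reduces to a one-line computation.
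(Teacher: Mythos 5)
Your proof is correct and follows essentially the same route the paper takes: unfold the definition of $\Delta_{f^n}$, use right-continuity for the $+$-limit, factor $f^n_- = f^{n-k}_-\circ f^k_-$ for the $-$-limit, and then invoke Property \ref{initialBP}(2a) to replace $f^k_-(f^{-k}(a))$ by $a$. No meaningful difference from the paper's argument.
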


Combining previous Lemma and Claim \ref{Delta}, we deduce that:
\begin{itemize}

\item  If  $\Delta_{f^{N+1}}(a) \not= 0  $ then   $ \Delta_{f^{n}} (f^{-k}(a)) \not= 0  $, for all  $n-k >N$.

 Hence $\# (BP_0(f^n)\cap \mathcal O_f(a)) \geq n-N$.

\smallskip

\item  If  $\Delta_{f^{N+1}}(a) = 0  $ then $ \Delta_{f^{n} }(f^{-k}(a)) =  0$,  for all  $n-k >N$.

Hence  $\# (BP_0(f^n)\cap \mathcal O_f(a)) \leq N + \left( n - (n-N) \right)= 2N$.
\end{itemize}

\medskip

\noindent {\bf Conclusion 1.}

\begin{itemize}

\item  If exists $a\in BP_0(f)$ non periodic initial break point such that $\Delta_{f^{N_a+1}}(a) \not= 0$, then
$$\#BP_0(f)\times n\geq \# BP_0(f^n)\geq n-N_a.$$  That is $\# BP_0(f^n)$ has linear growth.

\medskip

\item  If for all $a\in BP_0(f)$ non periodic initial break point, $\Delta_{f^{N_a+1}}(a) = 0$,
 then $$ \displaystyle BP_0(f^n) \leq \sum_{a\in A }  2N_a +  \sum_{a\in B} period(a),$$ where  $A=\{a\in BP_0(f) {\text { non periodic initial}} \}$ and $B= \{a\in BP_0(f) {\text { periodic } } \} $.

 \smallskip

  That is $\# BP_0(f^n)$ is bounded.

 \end{itemize}

\bigskip

\noindent {\bf Step 2: Alternative for $\# BP(f^n)$}.

\smallskip

If $\# BP_0(f^n)$ is not bounded then $\# BP(f^n)$ has linear growth, by Step 1.

\medskip

Now suppose that $\# BP_0(f^n)$ is bounded.

Let $a\in BP(f)$ be a non periodic initial break point and $S_a=\{a,f(a), ...,f^{N}(a) \}$ be the segment  containing all the break points of $f$ in the orbit of $a$.

\smallskip

Let $n\geq N+1$, recall that $BP(f^n)\cap \mathcal O_f(a)\subset \{f^ {-(n-1)}(a),...,a,..., f^N(a)\}$.

\medskip

Let us compute the jump of $f^{n}$ at the point $f^{-k}(a)$ for $k\geq 0$ and  $n-1-k>N$, that is for $0\leq k < n-1-N$.

Iterating the composition formula given in Property \ref{saut}, we get :

 $${\displaystyle\sigma}_ {f^{n}}(f^{-k}(a)) =
 \frac{ Df_+(f_+^{n-1}(f^{-k}(a))) \ ... \ Df_+(f_+^{k}(f^{-k}(a)) ) \ ... \  Df_+(f^{-k}(a))} {Df_-(f_-^{n-1}(f^{-k}(a))) \ ... \
  Df_-(f_-^{k}(f^{-k}(a) )) \ ... \  Df_-(f^{-k}(a))} .$$

According to Property \ref{initialBP} (2), $ f^{-k}(a) =f_-^{-k}(a)$ and therefore $f_-^l(f^{-k}(a))=f_-^l(f_-^{-k}(a)) =f_-^{l-k}(a)$, for any integer $l\geq 0$;  in addition, if $l\leq k$ then $f_-^l(f^{-k}(a))= f^{l-k}(a)$.

\medskip

Therefore, noting that  $n-1-k > N$, we get: ${\displaystyle\sigma}_ {f^{n}}(f^{-k}(a)) = $
 $$\frac{ Df_+(f_+^{n-1-k}(a) ) \ ... \ Df_+(f_+^{N+1}(a) )}{Df_-(f_-^{n-1-k}(a) ) \ ... \
  Df_-(f_-^{N+1}(a) )} \times
   \frac{ Df_+(f_+^{N}(a) ) \ ... \ Df_+(a)}{Df_-(f_-^{N}(a) ) \ ... \ Df_-(a)}\times
  \frac{Df_+(f^{-1}(a))\ ... \ Df_+(f^{-k}(a))}{Df_-(f^{-1}(a)) \ ... \ Df_-(f^{-k}(a))}.$$

As $BP_1(f)\cap \mathcal O_f(a)\subset S_a$, the third ratio is trivial.

\smallskip

Since $\# BP_0(f^n)$ is bounded, by Conclusion 1 and Claim \ref{Delta}, for all $m\geq N+1$ the point $f^m(a)=f_+^m(a)=f_-^m(a)$  and does not belong to $BP_1(f)$ (by Property \ref{initialBP} (2)). Thus, the first fraction is also trivial. Finally,  $${\displaystyle\sigma}_ {f^{n}}(f^{-k}(a)) =
   \frac{ Df_+(f_+^{N}(a) ) \ ... \ Df_+(a)}{Df_-(f_-^{N}(a) ) \  ... \  Df_-(a)}=:\Pi_a$$

Note that  this formula also holds for $n-1-k =N$, since the first fraction does not appear in ${\displaystyle\sigma}_ {f^{n}}(f^{-k}(a))$.

\medskip
Therefore, the following alternative holds.

\begin{itemize}
\item If  $\Pi_a\not=1  $ then for all $k$ integer such that $0\leq k\leq n-1-N$, $f^{-k}(a)\in BP_1(f^n)$ and $\# (BP_1(f^n)\cap \mathcal O_f(a)) \geq n-N$.

\item  If  $\Pi_a =1  $ then for all $k$ integer such that $0\leq k\leq n-1-N$, $f^{-k}(a)\notin BP_1(f^n)$ and $\# (BP_1(f^n)\cap \mathcal O_f(a)) \leq N +( n- (n-N)) = 2N$.

\end{itemize}

\medskip

\noindent {\bf Conclusion 2.}

\begin{itemize}
\item If exists $a\in BP_1(f)$ non periodic initial break point such that $ \Pi_a  \not= 1$, then
$$\#BP(f)\times n\geq \# BP_1(f^n)\geq n-N_a.$$ Hence $\# BP_1(f^n)$ and  $\# BP(f^n) $  have linear growth.

\medskip

\item If for all $a\in BP_1(f)$ non periodic initial break point $ \Pi_a = 1$, then
 $$ \displaystyle \# BP_1(f^n) \leq \sum_{a\in A }  2N_a +  \sum_{a\in B} period(a),$$ where  $A=\{a\in BP_1(f) {\text { non periodic initial}} \}$ and $B= \{a\in BP_1(f) {\text { periodic} } \} $.

 \smallskip

  Hence $\# BP_1(f^n)$ and  $\# BP(f^n) $ are bounded. \end{itemize} \hfill$\square$

\begin{prop} \label{bound} If $f$ is distorted in AIET then  $\# BP(f^n)$ is bounded. \end{prop}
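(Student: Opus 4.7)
The plan is to combine Proposition \ref{growth} with the subadditive behavior of the break-point set under composition (Property \ref{compBP}). Since Proposition \ref{growth} already gives a dichotomy between linear growth and boundedness, it suffices to show that $\# BP(f^n)$ cannot grow linearly when $f$ is distorted. I would do this by producing an upper bound on $\# BP(f^n)$ that is linear in the word-length $l_S(f^n)$, and then invoking the hypothesis $l_S(f^n)/n \to 0$.

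More concretely, suppose $f$ is distorted in some finitely generated subgroup $G = \langle S \rangle$ of $\mathcal A$, with $S = \{g_1,\dots,g_s\}$, and write $f^n = g_{i_{l_n}}\cdots g_{i_1}$ with $l_n = l_S(f^n)$ satisfying $\lim_{n\to\infty} l_n/n = 0$. Applying the inclusion $BP(h\circ g) \subset BP(g) \cup g^{-1}(BP(h))$ of Property \ref{compBP} inductively to the product $g_{i_{l_n}}\cdots g_{i_1}$, I would obtain
\[
BP(f^n) \;\subset\; \bigcup_{j=1}^{l_n} (g_{i_{j-1}}\cdots g_{i_1})^{-1}\bigl(BP(g_{i_j})\bigr),
\]
and hence, setting $C = \max_{1\le i\le s} \# BP(g_i)$,
\[
\# BP(f^n) \;\le\; \sum_{j=1}^{l_n} \# BP(g_{i_j}) \;\le\; C\, l_n.
\]

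Dividing by $n$ and letting $n\to\infty$ yields $\# BP(f^n)/n \to 0$. On the other hand, Proposition \ref{growth} asserts that $\# BP(f^n)$ is either bounded or has linear growth, and inspection of the proof shows that in the linear case one has an inequality of the form $\# BP(f^n) \ge n - N$ for some constant $N$ depending only on $f$, which would force $\liminf_n \# BP(f^n)/n \ge 1$, contradicting the bound just derived. Therefore $\# BP(f^n)$ must be bounded, which is the desired conclusion.

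There is no real obstacle here; the only point that deserves care is observing that Property \ref{compBP} applies equally to $BP$ itself (not just to $BP_0$) and that the linear growth alternative in Proposition \ref{growth} genuinely provides a lower bound linear in $n$, which is what actually produces the contradiction with the sub-linear growth of $l_n$.
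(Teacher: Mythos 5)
Your argument is correct and is essentially the paper's proof: both decompose $f^n$ as a word of length $l_n$ in the generators, use Property \ref{compBP} inductively to get $\# BP(f^n) \le C\, l_n$, and then play this sublinear upper bound against the linear lower bound $\# BP(f^n) \ge n - N$ supplied by the linear-growth branch of Proposition \ref{growth} to force boundedness.
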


\noindent{\bf Proof.}

By absurd, suppose that  $\# BP(f^n)$ is unbounded and $f$ is distorted  in a subgroup $G$ of  $\mathcal A$ generated by  $S=\{g_1,...,g_s\}$. This means that  $f^n$ can be written as $f^n =g_{i_{l_n}}... g_{i_1}$  with  $\displaystyle \lim_{n \rightarrow +\infty} \frac{l_n}{n}=0$. Therefore, by Property \ref{compBP},  we have:
$$\displaystyle BP(f^n) \subset  BP(g_{i_1}) \cup g_{i_1}^{-1}BP(g_{i_2}) ....\cup ( g_{i_{l_{n-1}}}... g_{i_1} ) ^{-1}BP(g_{i_{l_n}}), {\text { then }}$$
$$\displaystyle \#BP(f^n)\leq   \#BP(g_{i_1}) + \#BP(g_{i_2}) +...+ \#BP(g_{i_{l_n}}) \leq l_n \max \{\#BP(g_{i}), i=1,...s\}$$
 and therefore  $\displaystyle \frac{ l_n }{n} \geq \frac { \#BP(f^n) }{n} (\max \{\#BP(g_{i}), i=1,...s\})^ {-1}$.

Thus $\displaystyle \lim_{n \rightarrow +\infty} \frac{ l_n }{n}  >0$, since ${ \#BP(f^n) }$ has linear growth, according to Proposition \ref{growth}, this is a contradiction. \hfill $\square$

\section{Extended ''Alternative Version of Li's Theorem''.}
The aim of this section is to prove an extended version of the ''Alternate Version of Li's Theorem'' of \cite{No}.

\begin{theo}\label{LI}
 Let  $f$  in $\mathcal A$ without periodic points and  with $\# BP(f^n)$ bounded then there exists an integer $q$, such that $f^q$ is conjugate in $\mathcal E$ to a  product of restricted PL-homeomorphisms of disjoint support that are minimal when restricted to their respective supports.
\end{theo}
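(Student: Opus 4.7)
The plan is to extend Novak's Alternate Version of Li's Theorem from IETs to AIETs, using the boundedness of $\#BP(f^n)$ as the combinatorial input that replaces the slope-free structure automatically enjoyed in the IET case. The boundedness hypothesis, together with Proposition \ref{growth}, gives us the two crucial cancellation identities $\Delta_{f^{N_a+1}}(a)=0$ and $\Pi_a=1$ for every non-periodic initial break point $a$; these will play the role played by ``no slopes'' in Novak's proof.

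First, I would use the absence of $f$-periodic points to decompose the non-wandering set as a finite disjoint union of $f$-invariant closed sets $M_1,\dots,M_r$ on each of which $f$ is minimal. Classical arguments for (A)IETs bound the number of minimal components in terms of $\#BP(f)$, so $r$ is finite. After replacing $f$ by a suitable iterate $f^q$, I may assume each $M_i$ is $f^q$-invariant and $f^q|_{M_i}$ is minimal on its support. Each $M_i$ is a finite union of intervals that is permuted cyclically on its connected pieces by $f$, hence stabilized piece-by-piece by $f^q$.

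Second, for each $i$ I would construct a Kakutani--Rokhlin tower for $f^q|_{M_i}$ above a small sub-interval $J_i\subset M_i$: choose $J_i$ so that, inside $M_i$, the pieces $J_i,\ f^q(J_i),\dots,f^{q(k_i-1)}(J_i)$ are pairwise disjoint intervals covering $M_i$, where $k_i$ is the return time. The cancellations $\Delta_{f^{qN_a+1}}(a)=0$ and $\Pi_a=1$ guarantee that for every non-periodic initial break point $a$ of $f^q$ inside $M_i$, the orbit of $a$ contributes break points only in finitely many levels of the tower and the accumulated slope jumps cancel around the tower except possibly at the bottom return. Then I would define an IET $h\in\mathcal E$ that flattens each tower by sending its levels, in dynamical order, onto pairwise disjoint sub-intervals $I_i\subset[0,1)$ via piecewise translations. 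The conjugate $hf^qh^{-1}$ restricted to $I_i$ acts as the translation ``move up one level'' everywhere except at the single height corresponding to the top-to-base return, where it encodes the return map of $f^q$ to $J_i$. Hence $hf^qh^{-1}|_{I_i}$ is a restricted PL-homeomorphism, and minimality of $f^q|_{M_i}$ transfers to minimality of $hf^qh^{-1}|_{I_i}$.

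The main obstacle is the third step: verifying that the conjugation produces at most one interior break point per component. Using Property \ref{sautPL} iteratively along the tower, the interior break points of $hf^qh^{-1}|_{I_i}$ decompose into (a) break points of $h$ at internal level boundaries, which are cancelled by the translation structure chosen for $h$, and (b) break points of $f^q$ interior to the tower levels, which by Proposition \ref{growth} in the bounded regime either give trivial total jump or lie on a single orbit whose products telescope to $\Pi_a=1$. The delicate bookkeeping of $\Delta_{f^{qm}}$ and $\sigma_{f^{qm}}$ around the tower, showing that only the single wrap-around break survives, is the technical heart of the argument; the resulting normal form then supplies exactly the product of restricted PL-homeomorphisms with pairwise disjoint supports claimed in the theorem.
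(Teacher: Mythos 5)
Your proposal takes a fundamentally different route from the paper (Kakutani--Rokhlin towers versus the paper's combinatorial ``pair property'' reduction), but it has a genuine gap at the very first step, and that gap is essentially the content of the theorem.

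You begin by asserting a decomposition of the non-wandering set into finitely many minimal components $M_1,\dots,M_r$, each a finite union of intervals permuted by $f$, citing ``classical arguments for (A)IETs.'' For IETs (Novak's setting) such a decomposition does follow from Keane-type arguments, but for AIETs without periodic points it is not available a priori: an AIET can in principle exhibit wandering intervals or non-interval minimal sets, and ruling this out is precisely where the hypothesis that $\#BP(f^n)$ stays bounded must do real work, together with a Denjoy-type theorem. The paper's proof does not assume any minimal decomposition; instead it uses Lemma~\ref{pair} (bounded $\#BP_0(f^n)$ implies some iterate $f^N$ has the pair property), Lemma~\ref{remov} (a removable pair can be eliminated by conjugating by an IET, dropping $\#BP_0$ by two), and only \emph{after} all removable pairs have been eliminated does it deduce from the absence of periodic points that the invariant union $R=\bigsqcup[\omega_i,\omega_{\pi(i)})$ is all of $[0,1)$; minimality on each piece is then obtained at the very end via Denjoy's theorem for Class~P circle homeomorphisms applied to each restricted PL-homeomorphism. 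In other words, minimality is the \emph{output} of the argument, not an input you can take as the starting point.

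Two further issues are worth flagging. First, even granting the minimal decomposition, your flattening map $h\in\mathcal E$ cannot send tower levels ``by translation'' onto stacked copies: an AIET distorts lengths, so the levels $J_i, f^q(J_i),\dots$ generally have different lengths, and an IET $h$ preserves lengths, so the ``move up one level'' picture does not hold literally; the resulting first-return structure on $J_i$ would itself have to be analyzed, which you acknowledge but leave open. Second, you invoke both cancellation identities $\Delta_{f^{N_a+1}}(a)=0$ and $\Pi_a=1$. For Theorem~\ref{LI} the paper only needs the $\Delta$-cancellation (discontinuities, $BP_0$); the $\Pi_a=1$ cancellation on slope jumps ($BP_1$) is used later, in Proposition~\ref{PL}, to reduce the restricted PL-homeomorphisms to the two-slope maps $B_{\lambda_1,\lambda_2}$. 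Conflating these two stages obscures exactly which boundedness hypothesis carries which step.
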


\medskip

\begin{defi}
Let $f \in \mathcal A$. We say that $f$ satisfies {\bf pair property} if

\begin{enumerate}
\item $f$ does not have periodic points,
\item  $BP_0(f)=\{ \beta_1, ....\beta_s,\omega_1,....,\omega_s\}$, any pair $ (\beta_i,\omega_i)$ for $i=1,...s$ verifies $f(\beta_i)=\omega_i$ and $ \beta_i \notin BP_0(f^2)$ and

\item the $f$-orbits of $\beta_i$ are disjoint.
\end{enumerate}
\end{defi}

\noindent{\bf Convention.} Eventually re-indexing the $\omega_i$, we suppose that $0=\omega_1<\omega_2<...<\omega_s$.

\medskip

\noindent {\bf Basic Properties}

\smallskip

If $f$ has pair property, then any  associated pair $ (\beta_i,\omega_i)$, for $i=1,...s$, verifies
\begin{enumerate}
\item $\beta_i \in BP_0(f) \setminus BP_0(f^2)$,
\item $\omega_i \in BP_0(f) \cap BP_0(f^{-1})$,

\item pair property is invariant by $C^0$-conjugation.

\item if $f$ has pair property with associated pairs $ (\beta_i,\omega_i)$ for $i=1,...s$ then, for any $n\in \mathbb N$,
 $f^n$ has pair property with associated pairs $ (f^{-n} (\omega_i),\omega_i)$ for $i=1,...s$.

\end{enumerate}

\smallskip

Using these properties, we get  $f_-(\beta_i)  \in BP_0(f) \cap BP_0(f^{-1}) \cup\{1\}$  so we can give

\begin{defi}
Let $\pi$ be the  permutation  of $\{1,...s\}$ defined by:

either  $j={\pi(i)}$
 in the case that $f_-(\beta_i)= \omega_j$ or  ${\pi(i)}=1$ when $f_-(\beta_i)= 1$.
\end{defi}

Hence, one has $f(\omega_i)=f_-( \omega_{\pi(i)})$, for $i\not=\pi^{-1}(1)$, otherwise, for $i=\pi^{-1}(1)$,  $f(\omega_i)=f_-(1)$.

\begin{defi}

Let $f \in \mathcal A$ with pair property, a pair $ (\beta_i,\omega_i)$ is said {\bf removable} if either:

 (1) $\omega_{\pi(i)}<\omega_i$ or

 (2) $\omega_{\pi(i)}>\omega_i$ and there exists $\omega_j \in (\omega_i, \omega_{\pi(i)})$.

\end{defi}
\begin{lemm} \label{pair}
Let $f \in \mathcal A$ without periodic points and  for which $\#BP_0(f^n)$ is bounded, then there exists an iterate of $f$ that satisfies  pair property.
\end{lemm}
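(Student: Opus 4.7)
The plan is to exhibit, for a suitably large integer $q$, the iterate $g := f^q$ as satisfying pair property. I will read off the structure of $BP_0(g)$ orbit-by-orbit from the corresponding orbit segments of break points of $f$, then pair the ``backward'' half of each orbit segment with the ``forward'' half under $g$.

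First I will combine the hypothesis $\#BP_0(f^n)$ bounded with Conclusion~1 in the proof of Proposition~\ref{growth}: since $\#BP_0(f^n)$ cannot grow linearly, every non-periodic initial break point $a \in BP_0(f)$ must satisfy $\Delta_{f^{N_a+1}}(a) = 0$, and by Claim~\ref{Delta} in fact $\Delta_{f^m}(a) = 0$ for all $m \geq N_a + 1$. The corresponding vanishing for $f^{-1}$ holds as well, since Property~\ref{compBP} yields $\#BP_0(f^{-n}) = \#BP_0(f^n)$; thus at each terminal break point $c = f^{N_a}(a)$ the dual identity $\Delta_{f^{-m}}(f(c)) = 0$ holds for $m$ large.

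Next I set $N := \max_a N_a$ and take any $q > N$. For each non-periodic initial break point $a$ of $f$, Lemma~\ref{delta2} shows that the ``backward'' elements of $BP_0(g) \cap \mathcal{O}_f(a)$ are exactly the points $\beta_{a,m} := f^{m-q}(a)$ for $m \in S_a^0 := \{m \in \{1,\dots,N_a\} : \Delta_{f^m}(a) \neq 0\}$. I pair each such $\beta_{a,m}$ with $\omega_{a,m} := g(\beta_{a,m}) = f^m(a)$. The dual application of Lemma~\ref{delta2} to $f^{-1}$ at the initial break point $f(c) = f^{N_a+1}(a)$ of $f^{-1}$ then identifies the ``forward'' break points of $g$ in $\mathcal{O}_f(a)$, and these will be matched precisely with $\{\omega_{a,m} : m \in S_a^0\}$.

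I then verify pair property for $g$, indexing the pairs by all $(a,m)$ with $a$ a non-periodic initial break point and $m \in S_a^0$. Item~(1) follows since $f$ has no periodic points. Item~(2): by construction $g(\beta_{a,m}) = \omega_{a,m}$, and using Lemma~\ref{delta2} once more,
$$\Delta_{g^2}(\beta_{a,m}) = \Delta_{f^{2q}}(f^{m-q}(a)) = \Delta_{f^{q+m}}(a) = 0,$$
since $q+m > N_a$, so $\beta_{a,m} \notin BP_0(g^2)$. Item~(3): distinct initial break points of $f$ have disjoint $f$-orbits (otherwise one iterate of an initial point would be another initial point, contradicting the very definition); within a single $f$-orbit, $\beta_{a,m}$ and $\beta_{a,m'}$ for $m \neq m'$ both in $S_a^0$ differ by $f^{m-m'}$ with $|m - m'| < q$, hence belong to distinct $g$-cosets.

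The main obstacle is the matching claim in the second step: that the ``forward'' break points of $g$ in $\mathcal{O}_f(a)$, identified via the $f^{-1}$-analysis, coincide with $\{f^m(a) : m \in S_a^0\}$. This reduces to the equivalence $\Delta_{f^m}(a) \neq 0 \Longleftrightarrow \Delta_{f^{-(N_a+1-m)}}(f(c)) \neq 0$, which I expect to follow from the bijectivity of $f$ (any jump of $g$ is witnessed symmetrically through its inverse) together with the composition rule for jumps (Property~\ref{saut}) applied carefully along the orbit segment. Once this matching is established, the remainder is bookkeeping.
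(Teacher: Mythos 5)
Your approach coincides with the paper's almost step for step: you take $g=f^q$ with $q>N=\max_a N_a$ (the paper fixes $q=N+1$), you pair $\beta_{a,m}=f^{m-q}(a)$ with $\omega_{a,m}=f^m(a)$, you compute $\Delta_{g^2}(\beta_{a,m})=\Delta_{f^{q+m}}(a)=0$ via Lemma~\ref{delta2}, and you handle items (1) and (3) of the pair property the same way. The one place you leave a genuine gap is exactly the place you flag: showing that the forward break points of $g$ in $\mathcal O_f(a)$ match $\{\omega_{a,m}:m\in S_a^0\}$. You propose to close it by a ``dual $f^{-1}$-analysis,'' which is more roundabout than necessary, is not carried out, and would ultimately rest on the same injectivity argument anyway.

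The paper closes this gap directly from the two facts you have already established for each $m\in\{1,\dots,N_a\}$, namely $g(\beta_{a,m})=\omega_{a,m}$ and $\Delta_{g^2}(\beta_{a,m})=0$. From these alone one gets the equivalence $\beta_{a,m}\in BP_0(g)\Longleftrightarrow\omega_{a,m}\in BP_0(g)$: writing $\Delta_{g^2}(\beta_{a,m})=g(\omega_{a,m})-g_-\bigl(g_-(\beta_{a,m})\bigr)$, if $\beta_{a,m}\notin BP_0(g)$ then $g_-(\beta_{a,m})=\omega_{a,m}$ and the expression equals $\Delta_g(\omega_{a,m})$, forcing $\omega_{a,m}\notin BP_0(g)$; conversely, if $\beta_{a,m}\in BP_0(g)$ but $\omega_{a,m}\notin BP_0(g)$, then $g_-(\omega_{a,m})=g(\omega_{a,m})=g_-\bigl(g_-(\beta_{a,m})\bigr)$ while $\omega_{a,m}\ne g_-(\beta_{a,m})$; since $g$ is a bijection, the one-sided limit $g_-$ is injective on $(0,1)$ (otherwise images of disjoint left-neighborhoods would overlap), which is a contradiction. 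Combined with your characterization of the backward half via $m\in S_a^0$, this forces the forward half to be exactly $\{\omega_{a,m}:m\in S_a^0\}$, so $BP_0(g)$ is a disjoint union of pairs and the proof is complete. In short: your structure is correct, but you should replace the sketched $f^{-1}$-analysis with this ``both or neither'' observation, which falls out of the computation you already did.
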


\begin{proof}
According to Section 4, $\displaystyle BP_0(f) \subset  \bigcup_{a \scriptscriptstyle  {\text{  initial break point}} }$ \hskip -1 truecm $ \{ a, ...., f^{N_a}(a)\}$ ,  $\Delta_{f^{N_a+1}}(a) = 0  $  and
\begin{equation}\label{break} \Delta_{f^{n} }(f^{-k}(a)) =  0, {\text { for all  }} k\geq 0, n-k >N_a .\end{equation}

\smallskip

Let $N$ be the maximum of $\{N_a\}$, let  $F=f^{N+1}$, we have that $BP_0(F)\cap \mathcal O_f(a)\subset \{f^ {-N}(a),...,a,..., f^{N_a}(a)\}$.

We note that $\Delta_{F }(f^{-(N+1)+N_a+l}(a)) =  0, $ for $l=1,...,N-N_a+1$ by formula (\ref{break}).
Hence $\displaystyle BP_0(F)\cap \mathcal O_f(a)\subset \bigcup_{l=1}^{N_a} \{f^{-(N+1)+l}(a), f^l(a)\}.$

We claim that any pair $(\beta, \omega)=(f^{-(N+1)+l}(a), f^l(a))$ for $l=1,...,N_a$ satisfies $F(\beta)=\omega$ and $ \beta\notin BP_0(F^2)$.

Indeed,
obviously $F(\beta)=\omega$, we now compute $\displaystyle \Delta_{F^2}(\beta)=\Delta_{f^{2N+2}} (f^{-(N+1)+l } (a)  )=0$ by formula (\ref{break}) with $n= 2N+2, k=N+1 -l$.

It follows that either $f^{-(N+1)+l}(a)\in BP_0(F) $ and $ f^l(a) \in BP_0(F) $ or $f^{-(N+1)+l}(a)\notin BP_0(F) $ and $ f^l(a) \notin BP_0(F) $. So $ BP_0(F) $ is a finite union of pairs of the form
$(f^{-(N+1)+l}(a), f^l(a))$.

Obviously, $F$ satisfies the conditions $(1)$ and $(3)$ of the pair property. \end{proof}

\begin{lemm}\label{remov}

Let $F \in \mathcal A$ with pair property, and  $ (\beta_i,\omega_i)$ a removable pair, then there exists $E \in \mathcal E$ such that $\#BP_0(EFE^{-1})\leq\# BP_0(F)-2$ and $EFE^{-1}$ has also pair property.
Moreover, $BP_0(E) \subset BP_0(F) \cap BP_0(F^{-1})$.
\end{lemm}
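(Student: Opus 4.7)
The plan is to construct, for each removable pair $(\beta_i,\omega_i)$, an explicit IET $E\in\mathcal{E}$ whose break points lie in $\{0\}\cup\{\omega_1,\ldots,\omega_s\}$, and whose conjugation action cancels exactly the two break points $\beta_i$ and $\omega_i$ from $BP_0(F)$. Since by basic property (2) every $\omega_k$ lies in $BP_0(F)\cap BP_0(F^{-1})$, the required inclusion $BP_0(E)\subset BP_0(F)\cap BP_0(F^{-1})$ will follow automatically from this choice.

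In case (1), where $\omega_{\pi(i)}<\omega_i$, I would take $E$ to be the IET whose interior break points are $\omega_{\pi(i)}$ and $\omega_i$, realising a cyclic permutation of the three intervals $[0,\omega_{\pi(i)})$, $[\omega_{\pi(i)},\omega_i)$, $[\omega_i,1)$ designed so that the $F$-image piece ending at $\omega_{\pi(i)}$ becomes adjacent, in the new coordinates, to the $F$-image piece starting at $\omega_i$. In case (2), where $\omega_{\pi(i)}>\omega_i$ and there exists $\omega_j\in(\omega_i,\omega_{\pi(i)})$, I would proceed analogously with an IET whose interior break points are among $\omega_i$, $\omega_j$, $\omega_{\pi(i)}$; the intermediate point $\omega_j$ is precisely what makes the required cut-and-paste a bijection of $[0,1)$ with break points confined to $\omega$'s.

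To control $\#BP_0(EFE^{-1})$, I would apply Property \ref{compBP} to bound the candidate set of break points of the conjugate, and then use the composition formula of Property \ref{saut} to evaluate $\Delta_{EFE^{-1}}$ at each candidate. By the way $E$ has been built, the identity $F(\omega_i)=F_-(\omega_{\pi(i)})$ (which holds because $\beta_i\notin BP_0(F^2)$) forces the jumps of $F$ at both $\omega_i$ and $\beta_i$ to vanish in the new coordinates, while no spurious jump is introduced at the images of $BP_0(E)$, yielding $\#BP_0(EFE^{-1})\leq \#BP_0(F)-2$. The pair property of $EFE^{-1}$ then follows from basic property (3) together with the observation that the remaining $(s-1)$ pairs $(\beta_k,\omega_k)$ for $k\neq i$ are transported by $E$ to valid pairs of the conjugate.

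The main difficulty is this last verification: one must check that exactly two break points cancel and that no new break points of $EFE^{-1}$ appear at images of $BP_0(E)$. This is where the removability hypothesis is used in full: in cases (1) and (2) the geometric data $\omega_i$, $\omega_{\pi(i)}$, $\omega_j$ are in the configuration required for the cut-and-paste $E$ to be realised as a valid IET with break points only at $\omega$'s; in the non-removable case, where $\omega_{\pi(i)}$ sits immediately to the right of $\omega_i$ with no intermediate $\omega_j$, no such $E$ can be built inside $\mathcal{E}$.
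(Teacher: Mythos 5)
Your construction for case~(1) is exactly the paper's: an IET $E$ with $BP_0(E)=\{0,\omega_{\pi(i)},\omega_i\}$ realising the interval permutation $(1,2,3)\mapsto(1,3,2)$, after which one verifies directly that $E(\beta_i)$ and $E(\omega_i)$ drop out of $BP_0(EFE^{-1})$ while $BP_0(EFE^{-1})\subset E(BP_0(F))$. Your identification of the crucial identity $F(\omega_i)=F_-(\omega_{\pi(i)})$ as a consequence of $\beta_i\notin BP_0(F^2)$ is also correct and is what the paper uses. For case~(2) you propose to build $E$ directly with breaks among $\{\omega_i,\omega_j,\omega_{\pi(i)}\}$; the paper instead identifies $0$ with $1$ and cuts the circle at $\omega_j$ to reduce to case~(1). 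These are morally equivalent, and your direct route is fine.

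There is, however, one genuine gap in your plan: you say that the pair property of $EFE^{-1}$ follows from ``basic property~(3)'' (invariance of the pair property under $C^0$-conjugation). But $E$ is an IET with interior discontinuities, so conjugation by $E$ is \emph{not} a $C^0$-conjugation, and basic property~(3) does not apply. The paper supplies a separate argument here: since each remaining $(\beta_k,\omega_k)$ with $k\ne i$ is also a pair for $F^2$ (with $F^{-2}(\omega_k)$ in place of $\beta_k$), one shows $BP_0(E)\subset BP_0(F^2)\cap BP_0(F^{-2})$, hence $BP_0(EF^2E^{-1})\subset E(BP_0(F^2))$, and then deduces $E(\beta_k)\notin BP_0(EF^2E^{-1})$ by contradiction. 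You would need to add this kind of argument to actually certify the transported pairs. A much smaller slip: you invoke Property~\ref{saut} (which concerns the multiplicative jumps $\sigma$ of derivatives) to control $\Delta_{EFE^{-1}}$; what is actually needed at the candidate points is the direct computation of one-sided values $F_\pm$ and $E_\pm$, which is what the paper does, and Property~\ref{compBP} alone gives the candidate set.
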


\begin{proof} \

-- We begin by considering the case where $\omega_{\pi(i)} <\omega_i$.

\smallskip

Let $E$ be  in $\mathcal E$ with $BP_0(E)=\{0,\omega_{\pi(i)}, \omega_i\}$ and permutation $(1,2,3)\mapsto (1,3,2)$.

According to second item of  Basic Properties,  $BP_0(E) \subset BP_0(F) \cap BP_0(F^{-1})$.

\smallskip
One has that $$BP_0(EFE^{-1})\subset E F^{-1}(BP_0(E))\cup  E (BP_0(F))\cup  BP_0(E^{-1}).$$

As $ F^{-1}(BP_0(E)) \subset BP_0(F) $ and $BP_0(E^{-1})=E(BP_0(E)) \subset  E(BP_0(F) )$, it holds that
$$BP_0(EFE^{-1})\subset E (BP_0(F)) .$$

\smallskip

We prove that $E(\beta_i)$ and $E(\omega_i)$ do not belong to $BP_0(EFE^{-1})$, by computing the right and left values at these points.

\smallskip

\begin{itemize}

\item $EFE^{-1}(E(\beta_i))=EF(\beta_i)=E(\omega_i)=\omega_{\pi(i)}$  and since  $\beta_i\notin BP_0(E)$,

\item  $(EFE^{-1})_-(E(\beta_i))=E_-F_-(\beta_i) =E_-(\omega_{\pi(i)} )=\omega_{\pi(i)} $.

\end{itemize}

\noindent This proves that $E(\beta_i)\notin BP_0(EFE^{-1})$.

\begin{itemize}

\item  $EFE^{-1}(E(\omega_i))=EF(\omega_i)$ and

 \item $(EFE^{-1})_-(E(\omega_i))=(EFE^{-1})_-(\omega_{\pi(i)}))=(EF)_-(\omega_{\pi(i)})$.

Since, $F(\omega_i)=F_-(\omega_{\pi(i)})$ and do not belong to $BP_0(E)$,  $$EFE^{-1}(E(\omega_i))=(EFE^{-1})_-(E(\omega_i)).$$

\end{itemize}
This proves that $E(\omega_i)\notin BP_0(EFE^{-1})$.

\medskip

Finally,  $\#BP_0(EFE^{-1})\leq\# BP_0(F) -2$.

\medskip

We claim that $EFE^{-1}$ has pair property, with associated pairs of the form $(E(\beta_j), E(\omega_j))$.

Indeed, as $(F^{-2} (\omega_i),\omega_i)$ is a pair for $F^2$, it holds that  $BP_0(E) \subset BP_0(F^2) \cap BP_0(F^{-2})$ and same arguments as in the beginning of this proof,   show that $BP_0(EF^2E^{-1})\subset E( BP_0(F^{2})).$

Obviously $EFE^{-1}(E(\beta_j))= E(\omega_j)$.
Suppose, by absurd, that $E(\beta_j)\in BP_0(EF^2E^{-1})$ then $ \beta_j\in  E^{-1}(BP_0(EF^2E^{-1}))\subset  BP_0(F^{2})$, which is a contradiction.

It is clear that $EFE^{-1}$ also satisfies the conditions $(1)$ and $(3)$ of the pair property.

\medskip

-- Now, we consider the case $(\beta_i,\omega_i)$ removable and $\omega_{\pi(i)}> \omega_i$ and
there exists $\omega_j \in (\omega_i, \omega_{\pi(i)})$.

\smallskip

We identify $0$ to $1$ to get a circle and then we cut this circle at the point  $\omega_j$. We are in the previous case. This ends the proof of Lemma \ref{remov}.

\end{proof}

\noindent{\it Proof of Theorem \ref{LI}.}

\smallskip
Let $f\in \mathcal A$ without periodic points and with bounded $ \#  BP_0(f^n)$.

\smallskip

By Lemma \ref{pair}, there exists some integer $N$ such that $f^{ N}$ has the pair property.

Applying  Lemma \ref{remov} a finite number of  times, we get that $G=E_r.....E_1 f^ N E_1^{-1}... E_r^{-1}$ has pair property and no removable pair.

 Since  associated pairs  $(\beta_i,\omega_i)$ of $G$ are not removable,  then $\omega_{\pi(i)} > \omega_i$, for any $i$ and intervals $(\omega_i, \omega_{\pi(i)})$ are pairwise disjoint and disjoint from the last interval $(\omega_s,1)$; note that $s=\pi ^{-1}(1)$, since  by absurd $\omega_s < \omega_{\pi(s)}<1$, a contradiction.

\medskip

We claim that for any $1\leq i <s$, there exists a unique discontinuity point of $G$ in $(\omega_i, \omega_{\pi(i)})$ and  this still holds for  $(\omega_s,1)$.

\medskip

Indeed as $G_+(\omega_{i)})=G_-(\omega_{\pi(i)})$, the interval  $(\omega_i, \omega_{\pi(i)})$ contains at least a point of $BP_0(G)$, similar argument shows that the same holds for $(\omega_s,1)$.

 \smallskip

Pairs are unremovable so this discontinuity point is a $\beta_j$.

Since the number of $\beta$'s is exactly the number of  intervals of the form $(\omega_i, \omega_{\pi(i)})$ or $(\omega_s,1)$, then  $\beta_j$ is unique.

\medskip

Therefore $G([\omega_i, \omega_{\pi(i)}))= [\omega_j, \omega_{\pi(j)})$, since  $G_-(\beta_j)=\omega_{\pi(j)}$ and  $G_+(\beta_j)=\omega_{j}$, if $j \not= \pi ^{-1}(1)=s$. If $ j=s$, then  $G([\omega_i, \omega_{\pi(i)})= [\omega_s ,1 )$.

\smallskip

This implies that $R=\du\limits_{i=1}^{s-1}[\omega_i, \omega_{\pi(i)})\du [\omega_s ,1 )$ is $G$-invariant. We claim that it is $[0,1[$.

\smallskip

Indeed, if not, the complementary of $R$ is a finite union of half open intervals that is $G$-invariant and $G$ is continuous on each interval (since such intervals do not contain $\beta$'s and $\omega$'s the discontinuity points of $G$). Thus, these intervals are periodic which contradicts that $G$ ($f$) does not have periodic points.

\medskip

Moreover,  there exists an iterate of $G$ such that $G^l([\omega_i, \omega_{\pi(i)}))= [\omega_i, \omega_{\pi(i)})$,  $G^l([\omega_s, 1)= [ \omega_s, \omega_1)$ and the restriction of $G^l$ to any $[\omega_i, \omega_{\pi(i)})$,$i=1,..., s-1$ and to  $[\omega_s, 1)$  has just one interior discontinuity point.

\medskip

Finally, $G^l$  is a product of restricted PL-homeomorphisms $\Gamma_i$ with disjoint support and it is conjugated by $E=E_r.....E_1  $ to $f^{lN}$.

As $f$  and then $G^l$ has no periodic points, by Denjoy's Theorem for Class P circle homeomorphisms (see \cite{He}), each $\Gamma_i$ is minimal when restricted to its support.

\smallskip

This ends the proof of Theorem \ref{LI}.\hfill $\square$

\begin{rema}\label{remli}
Note that  endpoints of the supports of the restricted PL-homeomorphisms and discontinuities of the $E_i$'s are in the orbit of $BP_0(f)$.

In particular, if $f\in \mathcal A_{\mathbb{Q}}$ then  endpoints of the supports of the restricted
 PL-homeomorphisms are rational and $E\in  \mathcal A_{\mathbb{Q}}$.

\end{rema}

\section{PL conjugation.}

Next proposition is  due to Minakawa~\cite{Min}.

\begin{prop}\label{PL}

Let $f\in \mathcal A$ a PL-homeomorphism  such that  $\#BP(f^n)$  is bounded. Then there exists a PL-homeomorphism $H\in \mathcal A$ such that $ H\circ f \circ H^{-1}$ is
 an AIET $B=B_{\lambda_1,\lambda_2}$ verifying $\Lambda (B) =\{\lambda_1,\lambda_2\}$ and $BP(B)=\{0, B^ {-1} (0)\}$. In particular, $DB(x) =\lambda_1 $ on $[0, B^ {-1} (0))$ and $DB(x) =\lambda_2 $ on $[B^ {-1} (0),1)$.
\end{prop}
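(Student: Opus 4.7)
The plan is to construct a PL-homeomorphism $H \in \mathcal{A}$ whose derivative is a step function adapted to the slope structure of $f$, so that conjugation by $H$ collapses all slopes of $f$ into at most two values and eliminates all but two break points.

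\textbf{Step 1: Exploit the boundedness hypothesis.} First I would invoke Proposition \ref{growth} in the case where $\#BP(f^n)$ is bounded: by Conclusion 2 of its proof, for every non-periodic initial break point $a \in BP_1(f)$, the jump product $\Pi_a$ along the finite orbit segment $S_a$ equals $1$. Together with the trivial contribution of periodic break points, this gives the cohomological condition on the multiplicative cocycle $Df$ that will be needed to solve for $H$: the cocycle $\log Df$ is, up to an additive constant, a coboundary on each orbit that carries break points.

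\textbf{Step 2: Set up the cohomology equation.} For a conjugation $H$ to produce a map $B = H\circ f\circ H^{-1}$ of constant slope $\lambda$ outside a single interior break point, the density $\rho := H'$ (a step function) must satisfy
\[
\rho(f(x))/\rho(x) \;=\; \lambda / Df(x),
\]
wherever both sides are defined. Taking logarithms, this is the standard coboundary equation in the cocycle $\log Df$, and the value of $\lambda$ is pinned down by the circle closure condition that $\rho$ integrate to a fixed positive value on $[0,1)$.

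\textbf{Step 3: Build $\rho$ piecewise.} Then I would partition $[0,1)$ into the finitely many maximal intervals of constancy of $Df$ determined by $BP_1(f)$, and prescribe $\rho$ to be a positive constant on each of them. These constants are defined inductively along $f$-orbits of break points using the equation in Step 2; the condition $\Pi_a = 1$ from Step 1 is precisely what guarantees that this inductive definition is consistent, since any ``cycle'' in the orbit-dependency graph closes up trivially. Normalizing, $H(x) = (\int_0^x \rho(t)\,dt)/(\int_0^1 \rho(t)\,dt)$ defines a PL-homeomorphism of $[0,1)$ (viewed as the circle).

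\textbf{Step 4: Reduce to two slopes.} Finally I would check that $B = H\circ f\circ H^{-1}$ has constant derivative $\lambda_1 := \lambda$ on the image of the ``good'' part of $[0,1)$ where the coboundary equation holds exactly, and a single other constant $\lambda_2$ on the complementary subinterval, with the unique surviving interior break point being $B^{-1}(0)$. The value of $\lambda_2$ is then forced by the length identity $\lambda_1 \cdot |[0,B^{-1}(0))| + \lambda_2 \cdot |[B^{-1}(0),1)| = 1$, so $\Lambda(B) = \{\lambda_1,\lambda_2\}$ and $BP(B) = \{0, B^{-1}(0)\}$, as required.

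The main obstacle will be ensuring both the consistency of the piecewise constants in Step 3 and the reduction to exactly two surviving slopes in Step 4. The first is where the hypothesis $\#BP(f^n)$ bounded enters in an essential way, through the equalities $\Pi_a = 1$; the second requires a careful counting of how many distinct values of $\rho \circ f/\rho$ can persist after passing to the quotient, which is why Minakawa's construction \cite{Min} is delicate. One must verify that the ambiguity of the coboundary (addition of a constant to $\log \rho$) can be absorbed to leave exactly one interior discontinuity on the circle, rather than several.
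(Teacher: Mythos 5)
Your plan is the same as the paper's in spirit: view $DH$ as a solution of a multiplicative coboundary equation for the cocycle $Df$ along the $f$-orbit segments that carry the break points, with the consistency on each segment being exactly the identities $\Pi_{a_i}=1$ from Conclusion~2 of Proposition~\ref{growth}. Two small inaccuracies in the setup: the breaks of $H$ should sit on the finite forward orbit segments $\{f(a_i),\dots,f^{N_i}(a_i)\}$ rather than at $BP_1(f)$ itself (the correct equation $\sigma_H(f(x))=\sigma_H(x)/\sigma_f(x)$ then only needs to be enforced at those finitely many points, with $\sigma_H=1$ elsewhere); and introducing a target constant $\lambda$ from the start is unnecessary, since what one really controls is the vanishing of jumps $\sigma_{HfH^{-1}}(H(f^k(a_i)))=\sigma_H(f^{k+1}(a_i))\,\sigma_f(f^k(a_i))/\sigma_H(f^k(a_i))$, the two slopes then emerging automatically.

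The genuine gap is the one you yourself flag at the end: how to guarantee that the final map has \emph{exactly} two break points, i.e., how to handle the global closure constraint that the product of all jumps of a circle PL-homeomorphism must be $1$. Multiplying $\rho=DH$ by a constant (adding a constant to $\log\rho$) does not change any jump, so the ``ambiguity of the coboundary'' you invoke cannot absorb a non-trivial total product $\Pi(f)=\prod_{i,k}\sigma_{f^{N+1}}(f^k(a_i))$; something else is needed. The paper's resolution is concrete and you should make it explicit: when $\Pi(f)\neq 1$, introduce one auxiliary break point $c$ disjoint from $\bigcup_i\{a_i,\dots,f^{N_i}(a_i)\}$ with $\sigma_H(c)=\Pi(f)^{-1}$, and normalize $H(c)=0$. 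Then $B=H\circ f\circ H^{-1}$ has trivial jump at every $H(f^k(a_i))$, jump $\Pi(f)$ at $H(c)=0$ and jump $\Pi(f)^{-1}$ at $H(f^{-1}(c))=B^{-1}(0)$, giving $BP(B)=\{0,B^{-1}(0)\}$ and $\Lambda(B)=\{\lambda_1,\lambda_2\}$. When $\Pi(f)=1$ no auxiliary point is needed and $B$ is a rotation. Without this step, your Step~4 asserts the conclusion rather than proves it.
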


 \begin{rema}\label{Bosh}
The maps $B_{\lambda_1,\lambda_2} $ are PL-homeomorphisms. They were studied in \cite{Bos}. There it was proven that
 $B$ is $C^0$-conjugate to a rotation $R_{\rho}$, by a map
of the form $ \displaystyle x\mapsto \frac{(\omega ^ x-1)}{(\omega -1)}$ for some positive $\omega$ distinct from 1, when $\lambda_1\neq\lambda_2$ and if $\lambda_1 =\lambda_2$ then $B$ is a rotation.
\end{rema}

 \begin{rema}
 We shall give a refinement of  Minakawa's proof which will enable us to preserve arithmetic properties of $f$, this will be explained in Remark \ref{remmina}. An alternative proof using a "PL pair property" can be found in \cite{Lio}.

\end{rema}
\noindent {\bf Proof.}

As $\# BP(f^n)$ is bounded, Conclusion 2 in the proof of Proposition \ref{growth} indicates that there exists a  subset  $\{a_i, i \in \mathcal I \}$ of $ BP_1(f)$ such that  $ BP_1(f)$ is contained in $\bigsqcup_{i\in \mathcal I} {\mathcal S} _i$ with   ${\mathcal S}_i = \{ f^{k}(a_i),\ k=0, ...,N_i\}$ and $$   \Pi_{a_i}=\frac{ Df_+(f_+^{N_i}(a_i) ) .... Df_+(a_i)}{Df_-(f_-^{N_i}(a_i) ) .... Df_-(a_i)}=1, \ \  \forall i\in \mathcal I.$$

Note that  $\frac{ Df_+(f_+^{k}(a_i) )}{Df_-(f_-^{k}(a_i) ) }={\displaystyle\sigma}_ {f}(f^{k}(a_i)) $, since $f_+^{k}(a_i)=f_-^{k}(a_i)$ or $f_+^{k}(a_i)=0$ and $f_-^{k}(a_i)=1$.

Then   $\displaystyle \Pi_{a_i}=\prod_{c\in {\mathcal S}_i} \sigma_f(c) =1$,  for all $i\in \mathcal I$.

\smallskip

Consider a PL homeomorphism $H_f=H$ of $[0,1)$ such that:

  -- the break points of $H$  are the points  $f(a_i)$, \dots , $f^{N_i}(a_i)$, for $ i\in\mathcal  I$,

  --  with associated jumps $\sigma_H ({
    f^{k}(a_i)) = \sigma_{ f^{N+1}} (f^{k}(a_i))}$ for
  $k=1,\dots, N_i$, where $N = \max \{ N_i, i\in \mathcal I\}$.

  Note that  we also have $\sigma_H
  (a_i) = \sigma_{ f^{N+1}}(a_i)=  \prod_{n=0}^N\sigma_f (f^n(a_i)) = 1$.

\smallskip

  At the end of this proof, we indicate a general lemma about existence of PL-homeomorphisms with prescribed break points and slopes. It implies that a necessary and sufficient condition for the existence of
  such a homeomorphism $H$ is that the product of the
  $H$-jumps is trivial, that is

  $$\Pi(f):=\prod_{i\in \mathcal I,0\leq k\leq N_i } \sigma_{ f^{N+1}} (f^{k}(a_i))=1.$$

  -- If $\Pi(f) =1$, then we can define a map $H$ as above and normalize it by setting $H(0) =0$.

  -- If  $\Pi(f) \not=1$, then we add a break point
  $c \notin \{a_i,..., f^{N_i}(a_i)\}$  and require that $\sigma_H (c) = (\Pi (f)  )^{-1}$; we normalize $H$ by setting $H(c)=0$.

\smallskip

 Now, since $f$ and $H$ are PL-homeomorphisms, Property \ref{sautPL}  implies  that

\begin{itemize}

 \item the set $BP_1( H\circ f \circ H^{-1} )$  satisfies

$\begin{aligned}  BP_1(H\circ f \circ H^{-1}) &\subset BP_1( H^{-1})
      \cup  H( BP_1( f )) \cup   H\circ f^{-1} (BP_1(H) ) \\  & \subset \{H(a_i),...,
    H(f^{N_i}(a_i)), i\in  \mathcal I\} \cup \{ H(c), H(f^ {-1} (c))   \},  \end{aligned}$

    \smallskip

  \item for $i\in \mathcal I$, $0\leq k \leq N_i$, the jump of
    $H\circ f \circ H^{-1}$ at $H(f^k(a_i))$ is equal to
  $$
  \begin{aligned}
    \sigma_{H\circ f \circ H^{-1}}\bigl(H(f^k(a_i))\bigr)& =
    \frac{\sigma_{H}(f^{k+1}(a_i)) \times
      \sigma_{f}(f^{k}(a_i))} {\sigma_{H}(f^{k}(a_i))} \\
    &= \frac {\sigma_{f^{N+1}}(f^{k+1}(a_i)) \times
      \sigma_{f}(f^{k}(a_i)) } {
      \sigma_{f^{N+1}}(f^{k}(a_i))}=1,   \end{aligned}
   $$
  \item  $\sigma_{H\circ f \circ H^{-1}}(H(c)) = \Pi(f)$ and

  \item  $\sigma_{H\circ f \circ H^{-1}}\bigl(H(f^ {-1} (c) )\bigr) = \Pi(f)^{-1}$.

  \end{itemize}

   \bigskip

 \noindent{\bf Conclusion.}

 \smallskip

 \begin{itemize}

 \item  If  $\Pi(f) =1$, the AIET $B= H\circ f \circ
  H^{-1}$ has no break of slopes, it is a rotation.

  \item  If  $\Pi(f) \not=1$, the AIET $B= H\circ f \circ
  H^{-1}$ has exactly two break of slopes at $0=H(c)$ and $B^
  {-1} (0) =H(f^ {-1} (c))$, that is $\Lambda(B) = \{ \lambda_1,\lambda_2\}$. \hfill $\square$
   \end{itemize}

\medskip

\begin{lemm}

Given $0=c_0<c_1<...<c_p<1$ points in $[0,1)$ and $\sigma_0, ..., \sigma_p$ positive real numbers such that $\prod\limits_{i=0}^p \sigma_i = 1$, there exists a PL-homemorphism $H$ such that :

-- $BP_0(H)=\{c_0,c_1,...,c_p\}$ and

-- $\sigma_H(c_i) =\sigma_i$, for $i=0, ..., p$.

\end{lemm}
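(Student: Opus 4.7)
The plan is to construct $H$ by prescribing its derivative as a piecewise constant function on the partition $\{c_0,\dots,c_p\}$, with slopes determined from the $\sigma_i$'s up to a single multiplicative constant, and then to fix that constant by requiring $H$ to be a bijection of $[0,1)$.

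First, set $c_{p+1}=1$ and look for $H$ with $DH(x)=\lambda_i$ on $[c_i,c_{i+1})$. The required jumps $\sigma_H(c_i)=\lambda_i/\lambda_{i-1}$ for $i=1,\dots,p$ force the recursion $\lambda_i=\lambda_0\,\prod_{j=1}^{i}\sigma_j$. The jump at $c_0=0$ is $\sigma_H(0)=\lambda_0/\lambda_p$, and imposing $\sigma_H(0)=\sigma_0$ gives $\lambda_p=\lambda_0/\sigma_0$. Compatibility between this identity and the formula $\lambda_p=\lambda_0\,\prod_{j=1}^{p}\sigma_j$ is precisely the hypothesis $\prod_{i=0}^{p}\sigma_i=1$; that is where the assumption enters, and once it holds the $\lambda_i$'s are well-defined up to the global factor $\lambda_0>0$.

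Second, I would fix $\lambda_0$ by demanding $\int_0^1 DH(t)\,dt=1$, i.e.\ $\sum_{i=0}^{p}\lambda_i(c_{i+1}-c_i)=1$. Substituting the recursion yields
\[
\lambda_0=\Bigl(\sum_{i=0}^{p}\bigl(\prod_{j=1}^{i}\sigma_j\bigr)(c_{i+1}-c_i)\Bigr)^{-1},
\]
which is a positive real since every $\sigma_j>0$ and $c_{i+1}>c_i$. Then define $H$ on $[0,1)$ by $H(x)=\int_0^x DH(t)\,dt$. By construction $H$ is continuous, strictly increasing, affine with slope $\lambda_i$ on each $[c_i,c_{i+1})$, and satisfies $H(0)=0$, $\lim_{x\to 1^-}H(x)=1$, so it is a PL-homeomorphism of the circle viewed as a bijection of $[0,1)$, and its slope jumps at each $c_i$ are exactly the prescribed $\sigma_i$.

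Finally, if instead one wishes the freedom to pick the image of a distinguished point (as used in the proof of Proposition~\ref{PL}, where the normalization $H(c)=0$ may be required), I would simply post-compose the $H$ above with a rotation of $[0,1)$; this does not alter $DH$, hence does not affect the break-point or jump data. The only real content of the lemma is the compatibility relation at $c_0=0$, and, as noted, the hypothesis $\prod\sigma_i=1$ is exactly what makes the derivative function descend from the half-open interval $[0,1)$ to a coherent datum on the circle — so there is no substantive obstacle, the argument is a direct construction.
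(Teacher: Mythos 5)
Your construction is correct and is essentially the one the paper indicates: prescribe piecewise‑constant slopes $\lambda_i$ on $[c_i,c_{i+1})$, derive them recursively from the jumps (your $\lambda_i=\lambda_0\prod_{j=1}^{i}\sigma_j$ is exactly the paper's $\lambda_i=\sigma_1\cdots\sigma_{i-1}\lambda_1$ after a shift of index), normalize by requiring total length $\sum_i\lambda_i|I_{i+1}|=1$, and observe that the hypothesis $\prod\sigma_i=1$ is precisely the compatibility of the jump at $0$ with the recursion. The remark about post‑composing by a rotation to achieve a desired normalization (e.g.\ $H(c)=0$) is also in line with the paper's use of the lemma in Proposition~\ref{PL}; note only that, as the definitions of $BP_0$ and $\sigma_H$ make clear, the prescribed set $\{c_0,\dots,c_p\}$ is really $BP_1(H)$ (the break points of $DH$), and the equality holds when every $\sigma_i\neq 1$.
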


Proof is left to readers, however we indicate some elements of the construction of $H$.

If we denote $\Lambda(H)= \{ \lambda_1, ..., \lambda_{p+1} \}$, one has

-- $\lambda_i = \sigma_0 ... \sigma_{i-1} \lambda_1$ and

-- $\lambda_1= (\vert I_1 \vert +  \sigma_1\vert I_2 \vert + ... +\sigma_1...\sigma_{p} \vert I_{p+1} \vert)^{-1}$ (by computing the total length of $H([0,1))$).

\medskip

In particular, if $c_i$ and $\sigma_i$ are rational numbers then $\lambda_i \in \mathbb Q$. Moreover, we can choose $H$ such that  $H(c_j) \in \mathbb Q$, for some $c_j$ and  then $H\in \mathcal A_{\mathbb Q}$.

\medskip

Note that if $\prod\limits_{i=0}^p \sigma_i \not =1$ such an $H$ does not exist since it should satisfy that $ \lambda_{p+1}= \sigma_0 ... \sigma_{p} \lambda_1$ and $\sigma_{p+1}= \frac{\lambda_{p+1}}{\lambda_{1}}.$

\begin{rema}
\label{remmina}
We have described explicitly the conjugating  PL-homeomorphism $H$, we can deduce that if $f\in \mathcal A_{\mathbb Q}$ then the break points of $H$ and the jumps of $H$  belong to $\mathbb Q$, provided that the point $c$ is chosen in $\mathbb Q$. Therefore, if $f\in \mathcal A_{\mathbb Q}$ then conclusions of Proposition \ref{PL} hold with   $H$ and $B$ belonging to $\mathcal A_{\mathbb Q}$.

\end{rema}

\section{ The case of $\Lambda(B) = \{ \lambda_1,\lambda_2\}$.}

\begin{defi} Let $\alpha_1, ..., \alpha_s$ generating a rank $s$ free abelian multiplicative subgroup $\Lambda$ of $\mathbb R^{+*}$. Therefore, given $\lambda\in\Lambda$, there exists a unique $(n_1,...,n_s) \in \mathbb Z $, such that $\lambda=\alpha_1^{n_1} \ ... \  \alpha_s^{n_s}$ and we  define $\mathcal N_j(\lambda)= n_j$, for  all $j\in \{1,...,s\}$.

\end{defi}

\begin{prop} \label{lambda} \

 Let $B=B_{\lambda_1,\lambda_2}\in \mathcal A$, such that $\Lambda(B) =\{\lambda_1,\lambda_2\} \subset \Lambda$, $BP_1(B)=\{0, a=B^ {-1} (0)\}$ and $B$ is $C^0$-conjugate to an irrational rotation $R_{\rho}$.

 If $(\lambda_1,\lambda_2) \not=(1,1)$ then exist $j\in \{1,...,s\}$ and $x\in [0,1)$ such that $\frac{\mathcal N_j(D_+B^n (x))}{n}\rightarrow \nu \not=0$.

 \end{prop}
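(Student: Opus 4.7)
The plan is to exploit that $B$ has only two slopes, so the multiplicative cocycle $D_+B^n$ is a monomial in $\lambda_1,\lambda_2$. For every $x\in[0,1)$ whose forward orbit avoids the break points,
\[
D_+B^n(x)=\prod_{k=0}^{n-1}D_+B(B^k(x))=\lambda_1^{p_n(x)}\lambda_2^{\,n-p_n(x)},
\]
where $p_n(x):=\#\{0\le k<n:B^k(x)\in[0,a)\}$. Applying the group homomorphism $\mathcal N_j$ gives
\[
\mathcal N_j(D_+B^n(x))=p_n(x)\,\mathcal N_j(\lambda_1)+(n-p_n(x))\,\mathcal N_j(\lambda_2),
\]
so the problem reduces to computing $\lim_{n}p_n(x)/n=:t$ and then exhibiting some $j$ for which $\nu_j:=t\,\mathcal N_j(\lambda_1)+(1-t)\,\mathcal N_j(\lambda_2)$ is nonzero.

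For the Birkhoff limit I will use unique ergodicity. The $C^0$-conjugacy to the irrational rotation $R_\rho$ makes $B$ minimal, and since $R_\rho$ is uniquely ergodic, so is $B$, with unique invariant probability $\mu$ equal to the pullback of Lebesgue under the conjugacy. Extending Birkhoff convergence from continuous observables to the indicator $\mathbf{1}_{[0,a)}$ by a standard sandwich argument (approximating the characteristic function above and below by continuous functions), one obtains $p_n(x)/n\to t=\mu([0,a))$ for every $x\in[0,1)$, and therefore $\mathcal N_j(D_+B^n(x))/n\to\nu_j$ for each $j$ and each $x$.

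The remaining and crucial ingredient is the irrationality of $t$. Normalising the conjugacy $\phi\circ B=R_\rho\circ\phi$ so that $\phi(0)=0$, the unique discontinuity $a$ of $B:[0,1)\to[0,1)$ must be sent by the continuous bijection $\phi$ to the unique discontinuity $1-\rho$ of $R_\rho:[0,1)\to[0,1)$; hence
\[
t=\mu([0,a))=\mathrm{Leb}(\phi([0,a)))=1-\rho,
\]
which is irrational. A contradiction argument then finishes the proof: if $\nu_j=0$ for every $j$, the relation $t(\mathcal N_j(\lambda_1)-\mathcal N_j(\lambda_2))=-\mathcal N_j(\lambda_2)$, combined with $t\notin\mathbb Q$ and $\mathcal N_j(\lambda_i)\in\mathbb Z$, forces $\mathcal N_j(\lambda_1)=\mathcal N_j(\lambda_2)$ and hence $\mathcal N_j(\lambda_1)=\mathcal N_j(\lambda_2)=0$ for every $j$, i.e.\ $\lambda_1=\lambda_2=1$, contradicting $(\lambda_1,\lambda_2)\ne(1,1)$. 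The main subtlety I anticipate is the passage from unique ergodicity (valid for continuous observables) to Birkhoff convergence for the discontinuous $\mathbf{1}_{[0,a)}$ and the clean identification of $\phi(a)$ with $1-\rho$; both are handled by the unique ergodicity of $R_\rho$ and the fact that a homeomorphism conjugacy matches the discontinuity points of $B$ and $R_\rho$ as maps of $[0,1)$.
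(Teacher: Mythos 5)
Your proposal is correct and follows essentially the same route as the paper: express $D_+B^n(x)$ as $\lambda_1^{N_1}\lambda_2^{N_2}$, compute the Birkhoff frequency of $[0,a)$ via the conjugacy to $R_\rho$ (identifying it with $1-\rho$), and use irrationality of $\rho$ to force $\mathcal N_j(D_+B^n(x))/n$ to have a nonzero limit for some $j$. The only cosmetic difference is that you invoke unique ergodicity (with a sandwich argument for the indicator) to get everywhere-convergence, whereas the paper uses the Birkhoff Ergodic Theorem for $\mu$-a.e.\ $x$, which already suffices since the statement only asks for the existence of one $x$.
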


 \noindent{\bf Proof.}

Noting that  $B$ satisfies that $DB(x) =\lambda_1 $ on $[0, a)$ and $DB(x) =\lambda_2 $ on $[a,1)$, one has $ D_+B^n (x) = \lambda_1 ^{N_1(x,n)}\lambda_2^{N_2(x,n)}$, where

$\displaystyle N_1(x,n) =\# \{x, f(x), ..., f^{n-1} (x)\} \cap [0, a)=\sum_{k=0}^{n-1} \mathbb I _{[0, a)} (f^{k} (x)) $ and

$\displaystyle N_2 (x,n)=\# \{x, f(x), ..., f^{n-1} (x)\} \cap [a, 1)=\sum_{k=0}^{n-1} \mathbb I _{[a, 1)} (f^{k} (x))$.

 \medskip

The map $B$ has a unique invariant probability measure $\mu$, since it is $C^0$-conjugate to an irrational rotation $R_{\rho}$. More precisely, consider  $h$ such that $h \circ B\circ h^ {-1} =R_\rho$, one has $\mu(A) =\lambda(h(A)$, for all measurable set $A$. In particular,  $\mu ([0,a])= \mu([0,B^ {-1}(0))) =  \lambda([h(0),h\circ B^ {-1}(0)))=\lambda([h(0),R_\rho^{-1}(h(0)))) =(1-\rho)$ and  $\mu ([a,1))=\rho$.

\medskip

 Birkhoff Ergodic Theorem implies that for  $\mu$-almost  every point $x \in [0,1)$, one has   $$ \lim_{n\rightarrow +\infty} \frac{N_1(x,n) }{n} =    \mu ([0,a]) {\text { and } } \displaystyle  \lim_{n\rightarrow +\infty} \frac{N_2(x,n) }{n} = \mu ([a,1]).$$

\medskip
Now, let us write $ \lambda_1$ and $ \lambda_2$ in the basis  $\alpha_1, ..., \alpha_s$ of $\Lambda$: $\lambda_1= \alpha_1^{\beta_1} ... \alpha_s^{\beta_s}$ and $\lambda_2 =\alpha_1^{\delta_1} ... \alpha_s^{\delta_s}$ and  compute the coordinates $\mathcal N_j(D_+B^n (x))$ of $D_+B^n (x)$ in this basis.

\smallskip

As $\displaystyle D_+B^n (x) = \lambda_1 ^{N_1(x,n)}\lambda_2^{N_2(x,n)}=  \alpha_1^{\beta_1.N_1(x,n)+\delta_1.N_2(x,n) } \ ... \  \alpha_s^{\beta_s.N_1(x,n)+\delta_s.N_2(x,n)}$,  one has
$$\mathcal N_j(D_+B^n (x))=
\beta_j.N_1(x,n)+\delta_j.N_2(x,n).$$

 It follows that  $$\frac{\mathcal N_j(D_+B^n (x))}{n} = \beta_j.\frac{N_1(x,n)}{n}+\delta_j.\frac{N_2(x,n)}{n} \rightarrow \beta_j. (1-\rho) +\delta_j\rho = \rho( \delta_j-\beta_j) + \beta_j.$$

\smallskip

Finally, suppose that  $(\lambda_1,\lambda_2) \not=(1,1)$ then necessary $\lambda_1\not=\lambda_2$  and  there exists $j$ such that $\delta_j\not=\beta_j$.  Therefore, $\nu=\rho( \delta_j-\beta_j) + \beta_j\not=0$, as $\rho \notin \mathbb Q$.\hfill $\square$

\section{Proof of Theorem \ref{TH}.}

Let $f$ be distorted in $\mathcal A$, as $f$ has no a semi-hyperbolic periodic point, its periodic points  are not isolated. Using in addition that $BP_0(f)$ is finite, we get that the set $Per(f)$ of $f$-periodic points is the union of a finite collection of half open intervals with endpoints in the orbits of  $BP_0(f)$. Thereby, there exists some positive integer $p$ such that $ Per (f)=Per(f^p) = Fix (f^p)$. It is easy to check that  there exists $S \in \mathcal E$ whose discontinuities are endpoints of connected components of $Per(f)$ and  such that $Fix(S f^p S^{-1})$ is an interval $P=[0,a)$ and  the restriction of $S f^p S^{-1}$ to $M=[a,1)$ has no periodic points.

\medskip

Applying Theorem  \ref{LI} to the restriction of $S f^p S^{-1}$ to $M$,
there exist $q\in \mathbb N^*$ and $E\in \mathcal E$ such that
$ES\circ f^{pq} \circ (ES)^{-1}=\displaystyle \prod\limits_{i=1}^{p} f_i$, where $f_i$ are restricted PL-homeomorphisms with pairwise disjoint support $I_i=[a_i,b_i)$, $f_i\vert_{I_i} $ is minimal  and   $\#BP(f_i^n)$ is bounded (since $f$ is distorted).

\bigskip

Let $i\in \{1 \ ...\ p\}$, by Proposition  \ref{PL} to  $f_i\vert_{I_i} $,   it is conjugate by a PL-homeomorphism $H_i$ of $I_i$ to $B_i$ with $\Lambda(B_i) =\{\lambda_{i,1},\lambda_{i,2} \}$ and  $BP(B_i)=\{a_i, B_i^ {-1} (a_i)\}$. Since  $f_i\vert_{I_i} $ is minimal, $B_i$ also is minimal and according to Remark \ref{Bosh}, $B_i$ is $C^0$-conjugate to an infinite order  rotation $R_{\rho}$ of $I_i$.

\smallskip

Let $H\in \mathcal A$  defined by $H (x) = H_i(x)$, if $x\in I_i$ and $H(x)=x$, if $x\notin \cup I_i$ and let $B= (HES) \circ f^{pq} \circ (HES)^{-1}$.

\smallskip

It is easy to check that $B\vert_{I_i}= B_i$ and  $B$ is distorted in a subgroup $G=<g_1, ..., g_q>$ of $ \mathcal A$, since  $f$ is distorted in $\mathcal A$.

\medskip

Let $\Lambda_G$ be the  free abelian multiplicative subgroup $\Lambda$ of $\mathbb R^{+*}$ generated by $\{\ Dg_k(x),\  x\in [0,1), \ k\in \{1 \ ... \ q\} \ \}$. It has finite rank $s$, we consider a basis $\alpha_1, ..., \alpha_s$ of it.

\medskip

Let $i\in \{1 \ ... \ p\}$, note that  $\mathcal N_j(D_+B^n (y))=\mathcal N_j(D_+B_i^n (y))$, $\forall y \in I_i$. We suppose that  $(\lambda_{i,1},\lambda_{i,2}) \not= (1,1)$.

\medskip

On one hand,  by Proposition \ref{lambda}, there exist $j\in \{1,...,s\}$ and $x \in I_i$ such that $\frac{\mathcal N_j(D_+B^n (x))}{n}\rightarrow \nu \not=0$.

\medskip

On the other hand, since $B$ is  distorted  in $G$,  its iterates $B^n$ can be written \break  $B^n=g_{i_{l_n}} \ ... \ g_{i_1}$  with  $\displaystyle \lim_{n \rightarrow +\infty} \frac{l_n}{n}=0$.

\medskip

Hence $D_+B^n (x) = D_+ g_{i_{l_n}}(x_{l_{n}}) \  ... \  D_+ g_{i_1} (x_1)$, where  $x_m=  g_{i_{m-1}} \ ... \  g_{i_1}(x)$. Then
$$\displaystyle \vert \mathcal N_j(D_+B^n (x)) \vert \leq \sum_{m=1}^{l_n} \vert  \mathcal N_j(D_+ g_{i_m }(x_m) )\vert
\leq {l_n} S,$$

where $S= \max \{\ \vert  \mathcal N_j(D_+ g_{k}(y)) \vert, \ y\in [0,1), \  1\leq k  \leq q \ \}.$

\medskip

Finally,  $\displaystyle \frac{l_n}{n} \geq \frac {\vert \mathcal N_j(D_+B^n (x)) \vert}{nS} \rightarrow \frac {\vert \nu \vert}{S} >0$, this is a contradiction.

\medskip

Consequently, for any $i\in \{1 \ ... \ p\}$, $(\lambda_{i,1},\lambda_{i,2}) = (1,1)$ and thereby $B_i$ is an infinite order rotation of $I_i$.  Thus $B$  is a product of infinite order restricted rotations with pairwise disjoint supports.

\medskip

In conclusion, we have proved that when restricted to $M$, there exists an iterate of $f$ that is conjugate in $\mathcal A$ to a product of infinite order restricted rotations with pairwise disjoint supports. We conclude by noting that  $f_{\vert M^c}=Id_{\vert M^c }$. \hfill $\square$

\section{ Proof of Theorems \ref{BS}, \ref{NIL} and \ref{ratio}.}

\subsection{Proof of Theorem \ref{BS}.} \

Let $a,b$ in $\mathcal A$ such that $ba^m b^{-1}=a^n$ with $m,n$ integers and $\vert m\vert \neq \vert n\vert $. We will prove that $ a$ has finite order.

\smallskip

By absurd, since $a$ is distorted, eventually passing to a power of $a$ and conjugating $a$ and $b$ by an element of $\mathcal A$ we can suppose that $a$ is a product of infinite order restricted rotations $R_{\alpha_i}$ of disjoint supports $I_i$. We denote $\displaystyle a=\prod_{i=1}^{p}(R_{\alpha_i},I_i)$.

The main tool of the proof is the following
\begin{lemm} \label{comm}
Let $a,b \in \mathcal A$ with $a=\prod_{i=1}^{p}(R_{\alpha_i},I_i)$ and  $ba^m b^{-1}=a^n$ then there exists an integer $s$ such that $b^s$ maps $I_i$ to itself preserving the Lebesgue measure on $I_i$, $Leb\vert I_i$.

\end{lemm}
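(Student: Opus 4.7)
The plan is to first show that $b$ set-wise permutes the supports $I_i$, then take an appropriate power $s$ so that this permutation becomes trivial, and finally invoke unique ergodicity of the restricted rotations to conclude that Lebesgue measure on each $I_i$ is preserved.

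First I note that since each $R_{\alpha_i}$ has infinite order, the corresponding rotation angle is irrational with respect to $|I_i|$, so $\mathrm{supp}(a^k) = \bigcup_i I_i$ for every $k \in \mathbb Z \setminus \{0\}$. Taking supports on both sides of $ba^m b^{-1} = a^n$ then yields $b(\bigcup_i I_i) = \bigcup_i I_i$.

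Next I upgrade this to a set-wise permutation of the $I_i$ themselves. Fix $i$, pick $x \in I_i$, and let $j(x)$ denote the unique index with $b(x) \in I_{j(x)}$. The commutation relation gives $b(a^{mk}(x)) = a^{nk}(b(x)) \in I_{j(x)}$ for every $k \in \mathbb Z$, since the $a^n$-orbit of $b(x)$ stays in $I_{j(x)}$. By minimality of the infinite-order rotation $R_{m\alpha_i}$, the orbit $\mathcal O_{a^m}(x)$ is dense in $I_i$. On the other hand $b$ has only finitely many break points, so it cuts $I_i$ into finitely many maximal continuity subintervals and sends each such subinterval onto an interval contained in $\bigcup_j I_j$; as the $I_j$'s are disjoint intervals separated by gaps, each of these image intervals lies in a single $I_{j(\ell)}$. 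The dense orbit of $x$ visits every one of the finitely many pieces, forcing all indices $j(\ell)$ to coincide with $j(x)$. Running the symmetric argument for $b^{-1}$ yields the set-wise equality $b(I_i) = I_{\sigma(i)}$ for a well-defined permutation $\sigma$ of $\{1,\dots,p\}$.

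Finally, let $s$ be the order of $\sigma$, so that $b^s(I_i) = I_i$ for every $i$. Iterating the relation gives $b^s a^{m^s} b^{-s} = a^{n^s}$, whose restriction to $I_i$ reads $(b^s|_{I_i})\, R_{m^s\alpha_i}\, (b^s|_{I_i})^{-1} = R_{n^s \alpha_i}$. Both rotations are minimal (since $m^s, n^s \neq 0$ and $\alpha_i/|I_i|$ remains irrational) and therefore uniquely ergodic, each admitting normalized Lebesgue on $I_i$ as its only invariant probability measure. Hence $b^s|_{I_i}$ pushes normalized Lebesgue forward to itself and, because $b^s(I_i) = I_i$ as a set, it preserves $\mathrm{Leb}|_{I_i}$.

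The main obstacle in this plan lies in the second paragraph: because $b$ is not globally continuous, the standard topological argument ``a conjugacy sends minimal sets to minimal sets'' cannot be applied directly, and I must combine the density of a single rotation orbit with the finiteness of the break-point set of $b$ to rule out the possibility that $b$ fragments $I_i$ across several distinct supports $I_j$.
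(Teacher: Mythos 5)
Your argument is correct and yields the lemma, but it reaches the central fact by a route different from the paper's. The paper argues measure-theoretically from the start: since $ba^mb^{-1}=a^n$, the push-forward $b_*$ sends ergodic $a^m$-invariant probabilities on $X=\bigcup_i I_i$ to ergodic $a^n$-invariant ones, and unique ergodicity of irrational restricted rotations identifies these probabilities with the normalized Lebesgue measures on the $I_i$; this gives $b_*(\mathrm{Leb}|_{I_i})=\mathrm{Leb}|_{I_{\sigma(i)}}$ for some permutation $\sigma$, and $s$ is simply the order of $\sigma$. You instead first prove the set-wise identity $b(I_i)=I_{\sigma(i)}$ by a topological argument (density of a single $a^m$-orbit together with finiteness of $\mathrm{BP}(b)$) and only afterwards invoke unique ergodicity separately on each $I_i$. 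Your route has the merit of making the assertion that $b^s$ maps $I_i$ onto $I_i$ explicit, whereas in the paper it is left to be extracted from the equality of push-forward measures combined with the piecewise-affine structure of $b$. One inaccuracy in your write-up: you justify that each continuity piece of $b|_{I_i}$ lands inside a single $I_j$ by asserting that the $I_j$ are separated by gaps; adjacent supports may share an endpoint, so this need not hold. Fortunately your density argument already carries the weight once phrased carefully: on a continuity piece $J$, the $a^m$-orbit is dense in $J$, $b|_J$ is an affine homeomorphism, and the image of that dense set lies in $I_{j(x)}$; since $b(J)$ is a half-open interval it is therefore contained in $I_{j(x)}$, with no need for any gap hypothesis.
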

Let $X=\cup I_i$.
In what follows we identify $a$ with its restriction to $X$.

By unique ergodicity of  irrational rotations, one has that ergodic $a^p$-invariant probabilities on $X$ are $Leb \vert I_i$, for all $p\in \mathbb Z$.

As $Supp(ba^mb ^{-1})=b(Supp(a^m))$ and $Supp(a^n)=Supp(a^m)$, then $b(X)=X$ and $b$ is identified to its restriction to $X$.

The image by $b$ of an ergodic $a^m$-invariant measure is an ergodic $a^n$-invariant measure.

Hence, for some permutation $\sigma$, $b_{\star} (Leb \vert I_i)=Leb \vert I_{\sigma(i)}$. Thus there exists an integer  $s$ such that $b^s_{\star} (Leb \vert I_i)=Leb \vert I_{i} $.  \hfill $\square$

\medskip

Spectrum of irrational rotations viewed as IET  are $Sp((R_{\alpha},I, Leb\vert I )=<e^{2i\pi\frac{\alpha}{l}}>$ where $l=\vert I\vert$.

As a consequence of  $b^s a^{m^s} b^{-s}=a^{n^s}$, $b^s\vert I_i$ sends the  generator of $Sp(a^{m^s}, I_i, Leb \vert I_i )$ into a generator of $Sp(a^{n^s}, I_i, Leb \vert I_i)$. Then $$ e^{2i\pi\frac{\alpha_i}{l_i}m^s}=e^{\pm 2i\pi\frac{\alpha_i}{l_i} n^s}.$$

Finally, $\frac{\alpha_i}{l_i}m^s=\pm \frac{\alpha_i}{l_i}n^s $ mod $\mathbb{Z}$.
This is a contradiction since $\frac{\alpha_i}{l_i}\notin \mathbb{Q}$ and  $\vert m\vert \neq \vert n\vert $.

Therefore $a$ has finite order, hence any action of $BS(m,n)$ with  $\vert m\vert \neq \vert n\vert $ by elements of  $\mathcal A$  is not faithful.

\subsection{Proof of Theorem \ref{NIL}.} \

Let $G$ be a  nilpotent subgroup of $\mathcal A$.

Suppose by absurd that  $G$ is either  non abelian torsion free or finitely generated and not virtually abelian.

Since $G$ is nilpotent there exist $u,v \in G$ such that $c=[u,v]$ commutes with $u$ and $v$. Furthermore, we can choose $c$ of infinite order because $G$ is either non abelian torsion free or finitely generated and not virtually abelian. This implies

\begin{claim} \label{cla}For any integers $p$ and $q$, it holds that $$[u^p,v^q]=c^{pq}.$$
\end{claim}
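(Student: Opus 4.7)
The plan is to exploit the observation that $c := [u,v]$ is actually central in the subgroup $H := \langle u, v \rangle$: since $c$ commutes with both generators $u$ and $v$ by hypothesis, it commutes with every element of $H$, and in particular with every $u^p$ and $v^q$ that will appear. This reduces the claim to a routine commutator calculation in a nilpotent group of class at most two, where $\langle c\rangle$ sits inside the center.

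In such a setting the standard identities
\begin{equation*}
[a, bd] \;=\; [a, b] \cdot b\,[a, d]\,b^{-1}, \qquad [ab, d] \;=\; a\,[b, d]\,a^{-1} \cdot [a, d]
\end{equation*}
simplify whenever the inner commutator lies in the center: the conjugation then disappears, so the two maps $x \mapsto [u, x]$ and $x \mapsto [x, v]$ become group homomorphisms from $H$ into the central cyclic subgroup $\langle c \rangle$. This is the entire engine of the proof.

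The key steps, in order, would be: first, a short induction on $|p|$ showing $[u^p, v] = c^p$ for every $p \in \mathbb{Z}$; the inductive step reads $[u^{p+1}, v] = u\,[u^p, v]\,u^{-1} \cdot [u, v] = c^p \cdot c = c^{p+1}$, where conjugation by $u$ is trivial because $[u^p, v] = c^p$ was inductively shown to be a power of the central element $c$. Negative exponents are handled either symmetrically or from the relation $[u^p,v]\,[u^{-p},v]=1$ that the above homomorphism property forces. Second, an entirely analogous induction on $|q|$ yields $[u^p, v^{q+1}] = [u^p, v] \cdot v\,[u^p, v^q]\,v^{-1} = c^p \cdot c^{pq} = c^{p(q+1)}$, giving the asserted identity $[u^p, v^q] = c^{pq}$.

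There is no substantive obstacle here. The only point requiring care is the bookkeeping inside the induction: at each step one must already know that the element being conjugated is a power of $c$, so that centrality licenses dropping the conjugation. This is precisely what the induction hypothesis supplies, and it is exactly where the assumption that $c$ commutes with both $u$ and $v$ is genuinely used.
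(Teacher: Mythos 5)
Your proof is correct, and it supplies the routine calculation the paper leaves implicit: the authors simply state the claim as an immediate consequence of $c=[u,v]$ commuting with $u$ and $v$, without writing out the induction. Your argument is exactly the standard one in a group that is nilpotent of class at most two on $\langle u,v\rangle$: centrality of $c$ turns the commutator identities $[ab,d]=a[b,d]a^{-1}\,[a,d]$ and $[a,bd]=[a,b]\,b[a,d]b^{-1}$ into genuine bimultiplicativity of $(p,q)\mapsto[u^p,v^q]$, and the two inductions (first in $p$, then in $q$, with the negative-exponent case handled by the homomorphism property) give $[u^p,v^q]=c^{pq}$. There is no divergence from the paper's intent, only an explicit write-up of a step the paper treats as self-evident.
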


In particular, $c^{n^2}=[u^n,v^n],$ so $c$ is distorted.

 Hence, eventually passing to a power of $c$ and conjugating by an element of $\mathcal A$ we can suppose that $c$ is a product of infinite order restricted rotations $R_{\alpha_i}$ of disjoint supports $I_i$. We denote $\displaystyle c=\prod_{i=1}^{p}(R_{\alpha_i},I_i)$.

\medskip

Applying  Lemma \ref{comm} with $m=n$, $a=c$, and  $b=u$ [resp. $b=v$],  there exist $s_u$ [resp. $s_v$]
such that $u^{s_u}_{\star} (Leb \vert I_i)=Leb \vert I_{i} $ and $v^{s_v}_{\star} (Leb \vert I_i)=Leb \vert I_{i}$.

Therefore $u^{s_u}\vert I_i$ and  $v^{s_v}\vert I_i$ are  IET which commute with the rotation $R_{\alpha_i}$. Finally, by Lemma 5.1 of \cite{No},   $u^{s_u} \vert I_{i} $ and $v^{s_v}\vert I_i$ are rotations so they commute.

	According to claim \ref{cla}, $[u^{s_u},v^{s_v}]=c^{s_u s_v}$, so $ c^{s_u s_v}=Id$ on $I_i$ for any $i=1,...,p$. It follows that $c^{s_u s_v}=Id$. This contradicts that $c$ has infinite order.

\subsection{Proof of Theorem \ref{ratio}.} \

In this section we prove that there is no distortion elements in $\mathcal A_{\mathbb Q}$.

Let $f\in  \mathcal A_{\mathbb Q}$ distorted in $\mathcal A_{\mathbb Q}$, by Theorem \ref{TH}, there exist a positive integer $s$ and $H\in \mathcal A$, $E\in \mathcal E$ and  $S\in \mathcal E$ such that  $(HES) f^{pq} (HES)^{-1}= \prod\limits_{i=1}^{p}(R_{i},I_i)$ is a product of infinite order restricted rotations of disjoint supports $I_i$.

\medskip

We  first check  that  the conjugating maps  $H$, $E$ and  $S$ are in $\mathcal A_{\mathbb Q}$.

 By definition of $S$,  break points of $S$ are endpoints of connected components of $Per(f)$ so belong to the orbit of $ BP_0(f)$, that is contained in $\mathbb Q$.  Therefore $S \in  \mathcal A_{\mathbb Q}$

According to Remark \ref{remli}, $E\in \mathcal A_\mathbb{Q}$ and endpoints of the $I_i$'s are rational. Hence, by  Remark \ref{remmina}, $H\in \mathcal A_{\mathbb Q}$.

\medskip

Therefore  $(HES) f^{pq} (HES)^{-1} \in \mathcal A_{\mathbb Q}$ and then $R_i\in \mathcal A_{\mathbb Q}$. This is a contradiction. \hfill $\square$

\end{document}